\newtheorem{thm}{Theorem}
\newtheorem{lemma}{Lemma}
\newtheorem{coro}{Corollary}
\newtheorem{definition}{Definition}
\newtheorem{remark}{Remark}
\DeclareMathOperator{\rank}{rank}
 \title{A Distributed and Incremental SVD Algorithm for Agglomerative
   Data Analysis on Large Networks}
 \author{M.~A.~Iwen,
   B.~W.~Ong}
\pgfplotsset{compat=newest}
\begin{document}
\maketitle

\begin{abstract}
  In this paper it is shown that the SVD of a matrix can be constructed
  efficiently in a hierarchical approach.  The proposed algorithm is proven to
  recover the singular values and left singular vectors of the input matrix $A$ if its
  rank is known.  Further, the hierarchical
  algorithm can be used to recover the $d$ largest singular values and
  left singular vectors with bounded error.  It is also shown that the
  proposed method is stable with respect to roundoff errors or
  corruption of the original matrix entries.  Numerical experiments
  validate the proposed algorithms and parallel cost analysis.
\end{abstract}

% REQUIRED
%\begin{keywords}
%  Singular value decomposition; low-rank approximations; distributed computing; incremental SVD
%\end{keywords}

% REQUIRED
%\begin{AMS}
%  15-A23, %Factorization of Matrices
%  65-F20 %Pseudo-inverses
%  
%\end{AMS}

\section{Introduction}

The singular value decomposition (SVD) of a matrix,
\begin{align}
  A = U \Sigma V^*,
\end{align}
has applications in many areas including principal component analysis
\cite{jolliffe2002principal}, the solution to homogeneous linear
equations, and low-rank matrix approximations.  If $A$ is a complex
matrix of size $D\times N$, then the factor $U$ is a unitary matrix of
size $D\times D$ whose first nonzero entry in each column is a
positive real number
\footnote{This last condition on $U$ guarantees that the SVD of $A \in
  \mathbb{C}^{N \times N}$ will be unique whenever $AA^*$ has no
  repeated eigenvalues.},  $\Sigma$ is a rectangular matrix of size
$D\times N$ with non-negative real numbers (known as singular values)
ordered from largest to smallest down its diagonal, and $V^*$ (the
conjugate transpose of $V$) is also a unitary matrix of size $N\times
N$.  If the matrix $A$ is of rank $d < \min(D,N)$, then a reduced SVD
representation is possible:
\begin{align}
  A = \hat{U} \hat{\Sigma} \hat{V}^*,
\end{align}
where $\hat{\Sigma}$ is a $d\times d$ diagonal matrix with positive
singular values, $\hat{U}$ is an $D\times d$ matrix with orthonormal
columns, and $\hat{V}$ is a $d\times N$ matrix with orthonormal
columns.

The SVD of $A$ is typically computed in three stages: a bidiagonal
reduction step, computation of the singular values, and then
computation of the singular vectors.  The bidiagonal reduction step is
computationally intensive, and is often targeted for parallelization.
A serial approach to the bidiagonal reduction is the Golub--Kahan
bidiagonalization algorithm \cite{doi:10.1137/0702016}, which reduces
the matrix A to an upper-bidiagonal matrix by applying a series of
Householder reflections alternately, applied from the left and right.
Low-level parallelism is possible by distributing matrix-vector
multiplies, for example by using the cluster computing framework Spark
\cite{Zaharia:2012:RDD:2228298.2228301}.  Using this form of low-level
parallelism for the SVD has been implemented in the Spark project
MLlib \cite{DBLP:journals/corr/MengBYSVLFTAOXX15}, and Magma
\cite{1742-6596-180-1-012037}, which develops its own framework to
leverage GPU accelerators and hybrid manycore systems.  Alternatively,
parallelization is possible on an algorithmic level.  For example, it
is possible to apply independent reflections simultaneously; the
bidiagonalization has been mapped to graphical processing (GPU) units
\cite{Liu:2010:GPH:1851476.1851512} and to a distributed cluster
\cite{ls2009}.  Load balancing is an issue for such parallel
algorithms, however, because the number of off-diagonal columns (or
rows) to eliminate get successively smaller.  More recently, two-stage
approaches have been proposed and utilized in high-performance
implementations for the bidiagonal reduction
\cite{Ltaief:2013:HBR:2450153.2450154,Haidar:2013:IPS:2503210.2503292}.
The first stage reduces the original matrix to a banded matrix, the
second stage subsequently reduces the banded matrix to the desired
upper-bidiagonal matrix.  Further, these algorithms can be optimized
to hide latency and cache misses
\cite{Haidar:2013:IPS:2503210.2503292}.  The SVD of a bidiagonal
matrix can be computed in parallel using divide and conquer mechanisms
based on rank one tearings \cite{ doi:10.1137/S089547989120195X}.
Parallelization is also possible if one uses a probabilistic approach
to approximating the SVD \cite{halko2011finding}.

In this paper, we are concerned with finding the SVD of highly
rectangular matrices, $N\gg D$.  In many applications where such
problems are posed, one typically cares about the singular values, the
left singular vectors, or their product. For example, this work was
motivated by the SVDs required in Geometric Multi-Resolution Analysis
(GMRA) \cite{allard2011multiscale}; the higher-order singular value
decomposition (HOSVD) \cite{de2000multilinear} of a tensor requires
the computation of $n$ SVDs of highly rectangular matrices, where $n$
is the number of tensor modes.  Similarly, tensor train factorization
algorithms \cite{oseledets2011tensor} for tensors require the
computation of many highly rectangular SVDs. Indeed, the SVDs of
distributed and highly rectangular matrices of data appear in many
big-data machine learning applications.

To find the SVD of highly rectangular matrices, many methods have
focused on randomized techniques \cite{Ma2015statistical}.  Another
approach is to compute the eigenvalue decomposition of the Gram
matrix, $AA^*$ \cite{2015arXiv151006689A}.  Although computing the
Gram matrix in parallel is straightforward using the block inner
product, a downside to this approach is a loss of numerical precision,
and the general availability of the entire matrix $A$, to which one
may not have easy access (i.e., computation of the Gram matrix,
$AA^*$, is not easily achieved in an incremental and distributed
setting).  One can instead compute the SVD of the matrix incrementally
-- such methods have previously been developed to efficiently analyze
data sets whose data is only measured incrementally, and have been
extensively studied in the machine-learning community to identify
low-dimensional subspaces \cite{139256, doi:10.1137/0613061,
  1992ITSP...40..571D, Li20041509,Skocaj200827,6713992}.  In early
work, algorithms were developed to update an SVD decomposition when a
row or column is appended \cite{MR509670} using rank one updates
\cite{MR508586,MR0329227}. These scheme were potentially unstable, but
were later stablized \cite{Gu94,doi:10.1137/S0895479893251472} and a
fast version proposed \cite{MR2214744}.  An SVD-updating algorithm for
low-rank approximations was presented by Zha in Simon in 1990
\cite{MR1718703}.  If rank $d$ approximation of $A$ is known,
i.e. $A=\hat{U}\hat{\Sigma}\hat{V}^*$, rank $d$ approximation of
$[A,B]$ can be constructed by taking
\begin{enumerate}
\item The QR decomposition of $(I - \hat{U}\hat{U}^*)B = QR$, 
\item finding the rank $d$ SVD of
  \begin{align*}
    \left[
      \begin{tabular}{cc}
        $\hat{\Sigma}$ & $\hat{U}^*$B \\
        0 & R
      \end{tabular}
      \right]
    = \tilde{U}\tilde{\Sigma}\tilde{V}^*, {\rm and ~then}
  \end{align*}
\item forming the best rank $d$ approximation:
  \begin{align*}
    \left([\hat{U}, Q]\tilde{U}\right) \tilde{\Sigma}
    \left(
    \left[
      \begin{tabular}{cc}
        $\hat{V}$ & 0 \\
        0 & I
      \end{tabular}
      \right]
    \tilde{V}
    \right) ^*.
  \end{align*}
\end{enumerate}
This algorithm was adapted for eigen decompositions \cite{Kwok03} and
later utilized to construct incremental PCA algorithms \cite{1658299}.
Another block-incremental approach for estimating the dominant
singular values and vectors of a highly rectangular matrix uses a QR
factorization of blocks from the input matrix, which can be done
efficiently in parallel \cite{MR2908601}.  In fact, the QR
decomposition can be computed using a communication-avoiding QR (CAQR)
factorization \cite{MR2890264}, which utilizes a tree-reduction
approach.  Our approach is similar in spirit to the CAQR factorization
above \cite{MR2890264}, but differs in that we employ a block
decomposition approach that utilizes a partial SVD rather than a full
QR factorization.  This is advantageous if the application only
requires the singular values and/or left singular vectors of the input matrix $A$, as in tensor
factorization \cite{de2000multilinear,oseledets2011tensor} and GMRA
applications \cite{allard2011multiscale}.

The remainder of the paper is laid out as follows: In
Section~\ref{sec:hierarchical_svd}, we motivate incremental approaches
to constructing the SVD before introducing the hierarchical algorithm.
Theoretical justifications are given to show that the algorithm
exactly recovers the singular values and left singular vectors if the
rank of the matrix $A$ is known.  An error analysis is also used to
show that the hierarchical algorithm can be used to recover the $d$
largest singular values and left singular vectors with bounded error,
and that the algorithm is stable with respect to roundoff errors or
corruption of the original matrix entries.  In
Section~\ref{sec:numerical_results}, numerical experiments validate
the proposed algorithms and parallel cost analysis.

\section{An Incremental (hierarchical) SVD Approach}
\label{sec:hierarchical_svd}

The overall idea behind the proposed approach is relatively simple.
We require a {\it distributed} and {\it incremental} approach for
computing the singular values and left singular vectors of all data
stored across a large distributed network.  This can be achieved, for
example, by occasionally combining a previously computed
partial SVD representation of each node's past data with a new partial SVD of
its more recent data.  As a result of this approach each
separate network node will always contain a fairly accurate approximation
of its cumulative data over time.  Of course, these separate nodes'
partial SVDs must then be merged together in order to understand the network
data as a whole.  Toward this end, partial SVD approximations of
neighboring nodes can also be combined together hierarchically in order to eventually compute a
global partial SVD of the data stored across the entire network.

Note that the accuracy of the entire approach described above will be determined by
the accuracy of the (hierarchical) partial SVD merging technique,
which is ultimately what leads to the proposed method being both
incremental and distributed.  Theoretical analysis of this partial SVD
merging technique is the primary purpose of this section.  In
particular, we prove that the proposed partial SVD merging scheme is
numerically robust to both data and roundoff errors.  In addition, the merging scheme is also shown to be accurate
even when the rank of the overall data matrix $A$ is underestimated
and/or purposefully reduced.

\subsection{Mathematical Preliminaries}
Let $A \in \mathbb{C}^{D \times N}$ be a highly rectangular matrix,
with $N \gg D$.  Further, let $A^i \in \mathbb{C}^{D\times N_i}$ with
$i=1,2,\ldots,M$, denote the block decomposition of $A$, i.e.,
$A=\left[A^1|A^2|\cdots|A^M\right]$.
% To simplify the theoretical bounds associated with Algorithm~\ref{},
% we assume without loss of generality that $M$ is a power of two.

\begin{definition}
For any matrix $A \in \mathbb{C}^{D \times N}$, $(A)_d\in \mathbb{C}^{D
  \times N}$ is an optimal rank $d$ approximation to $A$ with respect
to Frobenius norm $\| \cdot \|_{\rm F}$ if
\begin{align*}
  \inf_{B\in \mathbb{C}^{D\times N}} \|B-A\|_F = \|(A)_d - A\|_F,
  \text{ subject to } \rank{(B)} \le d.
\end{align*}
\end{definition}
If $A$ has the SVD decomposition $A = U\Sigma V^*$, then $(A)_d
= \sum_{i=1}^d u_i \sigma_i v_i^*$, where $u_i$ and $v_i$ are singular
vectors that comprise $U$ and $V$ respectively, and $\sigma_i$ are
the singular values.

\begin{remark}
  The Frobenius norm can be computed using $ \|A\|^2_F = \sum_i^D
  \sigma_i^2$ Consequently, $ \|(A)_d - A \|^2_F = \sum_{d+1}^D
  \sigma_i^2$.
\end{remark}

This following lemma, Lemma~\ref{lemma:merge} proves that partial SVDs
of blocks of our original data matrix, $A \in \mathbb{C}^{D\times N}$,
can be combined block-wise into a new reduced matrix $B$ which has the
same singular values and left singular vectors as the original $A$.
This basic lemma can be considered as the simplest merging method for
either constructing an incremental SVD approach (different blocks of
$A$ have their partial SVDs computed at different times, which are
subsequently merged into $B$), a distributed SVD approach (different
nodes of a network compute partial SVDs of different blocks of $A$
separately, and then send them to a single master node for combination
into $B$), or both.

\begin{lemma}
  \label{lemma:merge}
  Suppose that $A \in \mathbb{C}^{D\times N}$ has rank
  $d\in\{1,\ldots,D\}$, and let $A^i \in \mathbb{C}^{D\times N_i},
  i=1,2,\ldots,M$ be the block decomposition of $A$, i.e.,
  $A=\left[A^1|A^2|\cdots|A^M\right]$.  Since $A^i$ has rank at most $d$,
  each block has a reduced SVD representation,
  \begin{align*}
    A^i = \sum_{j=1}^d u_j^i \sigma_j^i (v_j^i)^* = \hat{U}^i
    \hat{\Sigma}^i \hat{V}^{i*} , \quad i = 1,2,\ldots,M.
  \end{align*}
  Let $B := \left[\hat{U}^1 \hat{\Sigma}^1 | \hat{U}^2 \hat{\Sigma}^2
    |\cdots|\hat{U}^M\hat{\Sigma}^M \right]$.  If $A$ has the reduced
  SVD decomposition, $A = \hat{U}\hat{\Sigma}\hat{V}^*$, and $B$ has
  the reduced SVD decomposition, $B =
  \hat{U}'\hat{\Sigma}'\hat{V'}^*$, then $\hat{\Sigma} =
  \hat{\Sigma}'$, and $\hat{U} = \hat{U}'W$, where $W$ is a unitary
  block diagonal matrix.  If none of the nonzero singular values are
  repeated then $\hat{U} = \hat{U}'$ (i.e., $W$ is the identity when
  all the nonzero singular values of $A$ are unique).
\end{lemma}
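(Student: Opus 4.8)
The plan is to reduce the entire statement to the (essentially unique) spectral decomposition of the Hermitian positive semidefinite matrix $AA^*$, whose eigenvalues are the squared singular values of $A$ and whose eigenvectors are its left singular vectors. The key identity to establish is $AA^* = BB^*$; everything else then follows from uniqueness of the eigenvalues and from the rotational ambiguity of the eigenvectors.

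First I would exploit the block structure of $A$. Writing $A = [A^1|\cdots|A^M]$ and multiplying block-wise gives $AA^* = \sum_{i=1}^{M} A^i (A^i)^*$. Substituting the reduced SVD $A^i = \hat{U}^i \hat{\Sigma}^i \hat{V}^{i*}$ and using that $\hat{V}^i$ has orthonormal columns, so $\hat{V}^{i*}\hat{V}^i = I$, collapses the middle factor and yields $A^i (A^i)^* = \hat{U}^i (\hat{\Sigma}^i)^2 \hat{U}^{i*}$. Carrying out the identical block computation for $B = [\hat{U}^1\hat{\Sigma}^1|\cdots|\hat{U}^M \hat{\Sigma}^M]$, and using that each $\hat{\Sigma}^i$ is real and diagonal, gives $BB^* = \sum_{i=1}^M \hat{U}^i (\hat{\Sigma}^i)^2 \hat{U}^{i*}$. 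Comparing the two sums shows $AA^* = BB^*$. As an immediate consequence $\rank(B) = \rank(BB^*) = \rank(AA^*) = d$, so the reduced SVD of $B$ also has exactly $d$ positive singular values and $\hat{U}'$ has the same dimensions as $\hat{U}$, making the comparison below well-posed.

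Finally I would invoke uniqueness of the spectral decomposition. From $A = \hat{U}\hat{\Sigma}\hat{V}^*$ and $B = \hat{U}'\hat{\Sigma}'\hat{V'}^*$ we obtain two eigendecompositions $AA^* = \hat{U}\hat{\Sigma}^2\hat{U}^* = \hat{U}'(\hat{\Sigma}')^2\hat{U}'^* = BB^*$ of the same matrix, with eigenvalues ordered from largest to smallest down the diagonal. Since the eigenvalues of a matrix are uniquely determined, $\hat{\Sigma}^2 = (\hat{\Sigma}')^2$, and hence $\hat{\Sigma} = \hat{\Sigma}'$ because singular values are nonnegative. The orthonormal eigenvectors associated with a fixed eigenvalue are determined only up to a unitary change of basis within that eigenspace; grouping the columns by repeated singular value therefore produces a block-diagonal unitary $W$ with $\hat{U} = \hat{U}'W$, the blocks being indexed by the groups of equal singular values. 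When the nonzero singular values are distinct, every such eigenspace is one-dimensional, so $W$ reduces to a diagonal matrix of unimodular phases, and the normalization convention on the columns of $U$ (first nonzero entry a positive real) forces each phase to be $1$, giving $\hat{U} = \hat{U}'$.

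The main obstacle is not the algebra — the identity $AA^* = BB^*$ is a two-line computation — but rather phrasing the last step precisely: one must acknowledge that the claim of ``the same left singular vectors'' can only hold up to the intrinsic unitary freedom inside repeated-eigenvalue subspaces, and that collapsing $W$ to the identity in the distinct case genuinely requires the sign/phase normalization on $U$ recorded in the introduction.
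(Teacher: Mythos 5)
Your proposal is correct and follows essentially the same route as the paper: both establish $AA^* = BB^*$ by block-wise computation using $\hat{V}^{i*}\hat{V}^i = I$, and then read off the conclusions from the shared spectral decomposition, with the block-diagonal unitary $W$ accounting for rotational freedom inside repeated-eigenvalue subspaces. Your added remarks on $\rank(B)=d$ and on the phase normalization forcing $W=I$ in the distinct case are slightly more careful than the paper's treatment but do not change the argument.
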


\begin{proof}
  The singular values of $A$ are the (non-negative) square root of the
  eigenvalues of $A A^*$.  Using the block definition of $A$,
  \begin{align*}
    A A^* &= \sum_{i=1}^M A^i (A^i)^* 
    = \sum_{i=1}^M  \hat{U}^i \hat{\Sigma}^i (\hat{V}^i)^*   (\hat{V}^i) (\hat{\Sigma}^i)^* (\hat{U}^i)^* 
    = \sum_{i=1}^M  \hat{U}^i \hat{\Sigma}^i (\hat{\Sigma}^i)^* (U^i)^*
  \end{align*}
  Similarly, the singular values of $B$ are the (non-negative) square
  root of the eigenvalues of $B B^*$.
  \begin{align*}
    B B^* &= \sum_{i=1}^M (\hat{U}^i \hat{\Sigma}^i) (\hat{U}^i \hat{\Sigma}^i)^* 
    = \sum_{i=1}^M  \hat{U}^i \hat{\Sigma}^i (\hat{\Sigma}^i)^* (\hat{U}^i)^*
  \end{align*}
  Since $AA^* = BB^*$, the singular values of $B$ must be the same as
  the singular values of $A$.  Similarly, the left singular vectors of
  both $A$ and $B$ will be eigenvectors of $A A^*$ and $B B^*$,
  respectively.  Since $AA^* = BB^*$ the eigenspaces associated with each (possibly repeated) eigenvalue will also be
  identical so that $\hat{U} = \hat{U}'W$.  The block diagonal unitary matrix $W$ (with one unitary $h \times h$ block for each eigenvalue that is repeated $h$-times)
  allows for singular vectors associated with repeated singular
  values to be rotated in the matrix representation $\hat{U}$.
\end{proof}

We now propose and analyze a more useful SVD approach which takes the ideas present in Lemma~\ref{lemma:merge} to their logical conclusion.

\subsection{An Incremental (Hierarchical) SVD Algorithm}

The idea is to leverage the result in Lemma~\ref{lemma:merge} by
computing (in parallel) the SVD of the blocks of $A$, concatenating
the scaled left singular vectors of the blocks to form a proxy matrix
$B$, and then finally recovering the singular values and left singular
vectors of the original matrix $A$ by finding the SVD of the proxy
matrix.  A visualization of these steps are shown in
Figure~\ref{fig:cartoon_eg}.
\begin{figure}[htbp]
  \centering
  \includegraphics{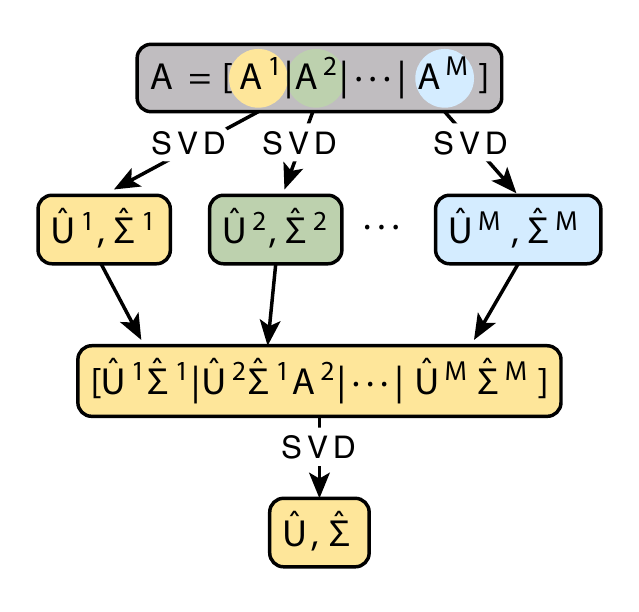}
  \caption{Flowchart for a simple (one-level) distributed parallel SVD
    algorithm.  The different colors represent different processors
    completing operations in parallel.}
  \label{fig:cartoon_eg}
\end{figure}
Provided the proxy matrix is not very large, the computational and
memory bottleneck of this algorithm is in the simultaneous SVD
computation of the blocks $A^i$.  If the proxy matrix is sufficiently
large that the computational/memory overhead is significant, a
multi-level hierarchical generalization is possible through repeated
application of Lemma~\ref{lemma:merge}.  Specifically, one could
generate multiple proxy matrices by concatenating subsets of scaled
left singular vectors obtained from the SVD of blocks of A, find the
SVD of the proxy matrices and concatenate those singular vectors to
form a new proxy matrix, and then finally recover the singular values
and left singular vectors of the original matrix $A$ by finding the
SVD of the proxy matrix.  A visualization of this generalization is
shown in Figure~\ref{fig:cartoon_eg2} for a two-level parallel
decomposition.  A general $q$-level algorithm is described in
Algorithm~\ref{algorithm:parallel_svd}.
%This distributed SVD algorithm is significant because it provides an
%efficient algorithm for the distributed parallel
%computation of the SVD algorithm, compared with present approaches.
\begin{figure}[htbp]
  \centering
  \includegraphics{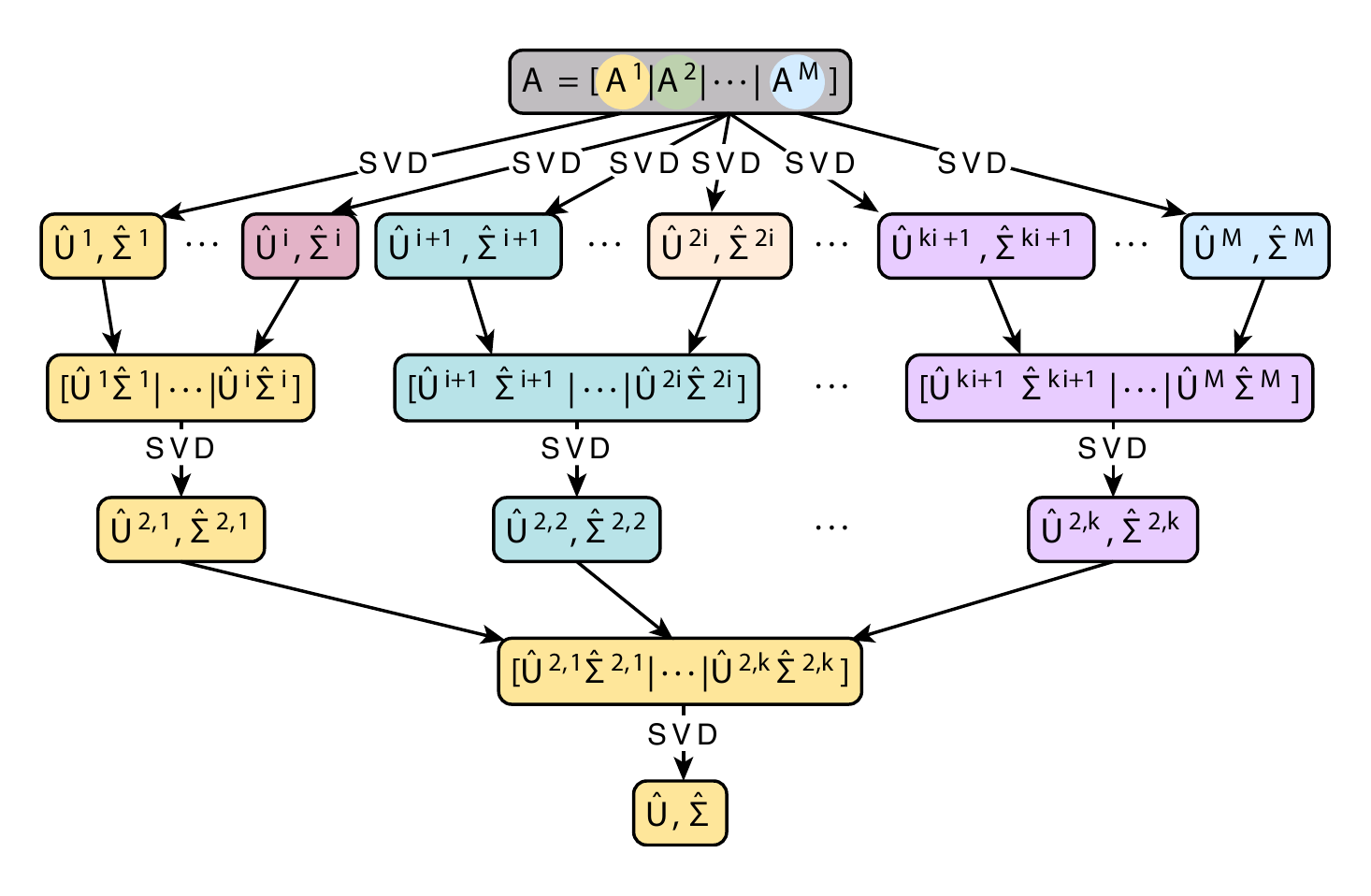}
  \caption{Flowchart for a two-level hierarchical parallel SVD
    algorithm. The different colors represent different processors
    completing operations in parallel.}
  \label{fig:cartoon_eg2}
\end{figure}
\begin{algorithm}
\renewcommand{\algorithmicrequire}{\textbf{Input:}}
\renewcommand{\algorithmicensure}{\textbf{Output:}}
\caption{A $q$-level, distributed SVD Algorithm for Highly Rectangular
  $A \in \mathbb{C}^{D \times N}$, ~ $N \gg D$. }
\label{alg:parallelSVD}
\begin{algorithmic}[1]
  \REQUIRE 
  $q$ (\# levels), \\
  $n$ (\# local SVDs to concatenate at each level), \\
  $d \in \{ 1, \dots, D \}$ (intrinsic dimension), \\
  $A^{1,i} := A^i \in \mathbb{C}^{D\times N_i}$ for $i=1,2,\ldots,M$ (block decomposition of $A$; algorithm assumes $ M
  = n^q$ -- generalization is trivial)\\
  
  \ENSURE $U' \in \mathbb{C}^{D \times d} \approx$ the first $d$
  columns of $U$, and $\Sigma' \in \mathbb{R}^{d \times d} \approx
  \left(\Sigma \right)_d$.

  \FOR {$p = 1, \dots, q$ }

  \STATE Compute (in parallel) the SVDs of $A^{p,i} = U^{p,i} \Sigma^{p,i}
  \left( V^{p,i} \right)^*$ for $i=1,2,\ldots,M / n^{(p-1)}$, unless the $U^{p,i} \Sigma^{p,i}$ are already available from a previous run.
  
  \STATE Set $\displaystyle A^{p+1,i} := \left[ \left( U^{p,(i-1)n+1} \Sigma^{p,(i-1)n+1}
    \right)_d ~\Big| \cdots \Big|~ \left( U^{p,in} \Sigma^{p,in} \right)_d \right]$
  for $i=1,2,\ldots,M / n^p$.
  
  \ENDFOR
  
  \STATE Compute the SVD of $A^{q+1,1}$

  \STATE Set $U' :=$ the first $d$ columns of $U^{q+1,1}$, and $\Sigma'
  := \left( \Sigma^{q+1,1} \right)_d$.
    \end{algorithmic}
\label{algorithm:parallel_svd}
\end{algorithm}

\begin{remark}
 The right singular vectors can be computed (in parallel) if desired,
 once the left singular vectors and singular values are known.  The
 master process broadcasts the left singular vectors and singular
 values to each process containing block $A^i$.  Then columns of the
 right singular vectors can be constructed by computing
 $\frac{1}{\sigma_j}(A^i)^* u_j$, were $A^i$ is the block of $A$
 residing on process $i$, and $(\sigma_j,u_j)$ is the $j^{th}$
 singular value and left singular vector respectively.
\end{remark}

\subsection{Theoretical Justification}

In this section we will introduce some additional notation for the
sake of convenience.  For any matrix $A \in \mathbb{C}^{D \times N}$
with SVD $A = U \Sigma V^*$, we will let $\overline{A} := U \Sigma = A
V \in \mathbb{C}^{D \times N}$.  It is important to note that
$\overline{A}$ is not necessarily uniquely determined by $A$, for
example, if $A$ is rank deficient and/or has repeated singular values.
In these types of cases many pairs of unitary $U$ and $V$ may appear
in a valid SVD of $A$.  In what follows, one can consider
$\overline{A}$ to be $A V$ {\it for any such valid unitary matrix
  $V$}.  Similarly, one can always consider statements of the form
$\overline{A} = \overline{B}$ as meaning that $A$ and $B$ are
equivalent up to multiplication by a unitary matrix on the right.
This inherent ambiguity does not effect the results below in a
meaningful way.  Given this notation Lemma~\ref{lemma:merge} can be
rephrased as follows:

\begin{coro}
Suppose that $A \in \mathbb{C}^{D\times N}$ has rank
  $d\in\{1,\ldots,D\}$, and let $A^i \in \mathbb{C}^{D\times N_i},
  i=1,2,\ldots,M$ be the block decomposition of $A$, i.e.,
  $A=\left[A^1|A^2|\cdots|A^M\right]$.  Since $A^i$ has rank at most $d$ for all $i=1,2,\ldots,M$,
we have that 
$$\overline{(A)_d} ~=~ \overline{A} ~=~ \overline{\left[ \overline{A^1} ~\big|~ \overline{A^2} ~\big|~ \cdots ~\big|~ \overline{A^M}\right]} ~=~ \overline{\left[ \overline{(A^1)_d} ~\big|~ \overline{(A^2)_d} ~\big|~ \cdots ~\big|~ \overline{(A^M)_d}\right]}.$$
\label{coro:merge}
\end{coro}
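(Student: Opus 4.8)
The plan is to reduce every equality in the chain to an identity between the $D \times D$ Gram matrices $XX^*$, since this is precisely the invariant that the bar-equivalence $\overline{X} = \overline{Y}$ encodes. First I would record the two facts that drive everything. For any $X \in \mathbb{C}^{D \times K}$ with SVD $X = U\Sigma V^*$ one has $\overline{X}\,\overline{X}^* = (U\Sigma)(U\Sigma)^* = U\Sigma\Sigma^* U^* = U\Sigma V^* V \Sigma^* U^* = XX^*$, so passing from $X$ to $\overline{X}$ never changes the Gram matrix. Conversely, two matrices $X,Y$ with the same number of rows satisfy $\overline{X} = \overline{Y}$ in the sense defined above exactly when $XX^* = YY^*$: this is simply the statement that $XX^*$ determines the left singular vectors and singular values of $X$ up to the block-diagonal unitary freedom already discussed in Lemma~\ref{lemma:merge}. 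Granting this, each claimed equality becomes a short Gram-matrix computation.

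Next I would dispatch the two ``truncation'' equalities using the rank hypothesis. Because $\rank(A) = d$, the optimal rank-$d$ approximation is $A$ itself, so $(A)_d = A$ and hence $\overline{(A)_d} = \overline{A}$ is immediate. Similarly, each block $A^i$ is a column-submatrix of $A$, so its columns lie in the $d$-dimensional column space of $A$ and $\rank(A^i) \le d$; therefore $(A^i)_d = A^i$ for every $i$, which makes the two concatenations $\left[\overline{A^1}|\cdots|\overline{A^M}\right]$ and $\left[\overline{(A^1)_d}|\cdots|\overline{(A^M)_d}\right]$ agree block-by-block and so yields the final equality.

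It then remains to establish the central equality $\overline{A} = \overline{\left[\overline{A^1}|\cdots|\overline{A^M}\right]}$. Writing $B := \left[\overline{A^1}|\cdots|\overline{A^M}\right] = \left[\hat{U}^1\hat{\Sigma}^1|\cdots|\hat{U}^M\hat{\Sigma}^M\right]$, I recognize $B$ as exactly the proxy matrix of Lemma~\ref{lemma:merge}. Applying the first recorded fact block-by-block together with the block expansion $AA^* = \sum_{i=1}^M A^i (A^i)^*$, I would compute
\[
  BB^* = \sum_{i=1}^M \overline{A^i}\,(\overline{A^i})^* = \sum_{i=1}^M A^i (A^i)^* = AA^*,
\]
and then invoke the converse fact to conclude $\overline{A} = \overline{B}$. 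Chaining the three equalities delivers the statement.

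The main thing to handle with care — rather than a genuine obstacle — is the bar-equivalence when $A$ and $B$ carry different numbers of columns ($N$ versus $Md$), so that a literal right-unitary factor relating $\overline{A}$ and $\overline{B}$ would be rectangular. Routing the entire argument through the $D \times D$ Gram matrices $XX^*$ sidesteps this, since those always share the same size and their equality is exactly the invariant content of $\overline{X} = \overline{Y}$; the residual unitary ambiguity coming from repeated singular values is then absorbed, just as in Lemma~\ref{lemma:merge}, by the block-diagonal unitary $W$.
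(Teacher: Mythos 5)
Your proposal is correct and follows essentially the same route as the paper: both arguments reduce the central equality to the Gram-matrix identity $AA^* = BB^*$ for $B = \left[\overline{A^1}|\cdots|\overline{A^M}\right]$ and then dispose of the truncation equalities by noting that $\rank(A)=d$ forces $(A)_d = A$ and $(A^i)_d = A^i$. The only difference is one of packaging — you inline the Gram computation and state the equivalence ``$XX^*=YY^*$ iff $\overline{X}=\overline{Y}$'' as a standalone fact, whereas the paper cites Lemma~\ref{lemma:merge} and makes that equivalence explicit via the commutation of the block-diagonal unitary $W$ with $\Sigma$, which is the justification your ``converse fact'' implicitly relies on.
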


\begin{proof}
To begin, we apply Lemma~\ref{lemma:merge} with $A=\left[A^1|A^2|\cdots|A^M\right]$ and\\ $B = \left[ \overline{A^1} ~\big|~ \overline{A^2} ~\big|~ \cdots ~\big|~ \overline{A^M}\right]$.  As a result we learn that if $A$ has the SVD $A = U \Sigma V^*$, then $B$ also has a valid SVD of the form $B = (UW) \Sigma {V'}^*$, where $W$ is a block diagonal unitary matrix with one unitary $h \times h$ block associated with each singular value that is repeated $h$-times (this allows for left singular vectors associated with repeated singular values to be rotated in the unitary matrix $U$).  Note that the special structure of $W$ with respect to $\Sigma$ allows them to commute.  Hence, we have that 
$$B = (UW) \Sigma {V'}^* ~=~ U \Sigma W {V'}^* ~=~ U \Sigma (V' W^*)^*.$$
As a result we can see that $A = B (V' W^* V^*)$ which, in turn, guarantees that $\overline{A} = \overline{B}$.  Hence, 
\begin{equation}
\overline{\left[A^1|A^2|\cdots|A^M\right]} = \overline{A} = \overline{B} = \overline{\left[ \overline{A^1} ~\big|~ \overline{A^2} ~\big|~ \cdots ~\big|~ \overline{A^M}\right]}.
\label{equ:proofcoro:merge}
\end{equation}

To finish, recall that $A$ has rank $d$.  Thus, all it's blocks, $A^i$ for $i=1,2,\ldots,M$, are also of rank at most $d$ (to see why, consider, e.g., the reduction of $A=\left[A^1|A^2|\cdots|A^M\right]$ to row echelon form).  As a result, both $(i)$ $A = (A)_d$, and $(ii)$ $A^i = (A^i)_d$ for all $i=1,2,\ldots,M$, are true.  Substituting these equalities into \eqref{equ:proofcoro:merge} finishes the proof.
\end{proof}

We can now prove that Algorithm~\ref{alg:parallelSVD} is guaranteed to recover $\overline{A}$ when the rank of $A$ is known.  The proof follows by inductively applying Corollary~\ref{coro:merge}.

\begin{thm}
Suppose that $A \in \mathbb{C}^{D\times N}$ has rank $d\in\{1,\ldots,D\}$.  Then, Algorithm~\ref{alg:parallelSVD} is guaranteed to recover an $A^{q+1,1} \in \mathbb{C}^{D\times N}$ such that $\overline{A^{q+1,1}} = \overline{A}$.
\label{thm:FullRankRecovery}
\end{thm}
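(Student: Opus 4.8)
The plan is to prove the statement by induction on the level index $p$, carrying along an invariant that says each intermediate block $A^{p,i}$ faithfully encodes, in the sense of the bar operation, the contiguous group of original blocks from which it was assembled. Writing $C_{p,i} := \left[ A^{(i-1)n^{p-1}+1} ~\big|~ \cdots ~\big|~ A^{in^{p-1}} \right]$ for that group, the inductive hypothesis is, for $p = 1, \ldots, q+1$ and each $i = 1, \ldots, M/n^{p-1}$,
$$\overline{A^{p,i}} ~=~ \overline{C_{p,i}} \qquad \text{and} \qquad \rank(A^{p,i}) \le d.$$
The base case $p = 1$ is immediate, since $A^{1,i} = A^i = C_{1,i}$ is a single original block, whose rank is at most $d$ because $A$ itself has rank $d$.

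For the inductive step I would first observe that the rank bound is exactly what makes the rank-$d$ truncation in Step~3 of the algorithm lossless: because $\rank(A^{p,j}) \le d$ we have $(A^{p,j})_d = A^{p,j}$, and hence $\left( U^{p,j} \Sigma^{p,j} \right)_d = \overline{A^{p,j}}$, so that the algorithm in fact forms $A^{p+1,i} = \left[ \overline{A^{p,(i-1)n+1}} ~\big|~ \cdots ~\big|~ \overline{A^{p,in}} \right]$. Since the inductive hypothesis places the columns of every $A^{p,j}$ inside the $d$-dimensional column space of $A$, this concatenation again has rank at most $d$, so Corollary~\ref{coro:merge} applies and its middle equality yields $\overline{A^{p+1,i}} = \overline{\left[ A^{p,(i-1)n+1} ~\big|~ \cdots ~\big|~ A^{p,in} \right]}$.

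To close the step I would replace each inner block $A^{p,j}$ by the group of original blocks it represents, using a small auxiliary fact: if $\overline{X^j} = \overline{Y^j}$ for every $j$, then $\overline{\left[ X^1 | \cdots | X^m \right]} = \overline{\left[ Y^1 | \cdots | Y^m \right]}$. This holds even when corresponding blocks have different numbers of columns, since it follows from $\sum_j X^j (X^j)^* = \sum_j Y^j (Y^j)^*$ together with the fact, established in the proof of Lemma~\ref{lemma:merge}, that the singular values and left singular vectors of a matrix $Z$ are determined by $Z Z^*$. Feeding the inductive hypothesis $\overline{A^{p,j}} = \overline{C_{p,j}}$ into this fact gives $\overline{A^{p+1,i}} = \overline{\left[ C_{p,(i-1)n+1} ~\big|~ \cdots ~\big|~ C_{p,in} \right]} = \overline{C_{p+1,i}}$, the final equality being that these $n$ consecutive groups concatenate to exactly $C_{p+1,i}$. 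Since $C_{p+1,i}$ is a column sub-block of the rank-$d$ matrix $A$ and $\overline{A^{p+1,i}} = \overline{C_{p+1,i}}$ forces equal singular values, we get $\rank(A^{p+1,i}) = \rank(C_{p+1,i}) \le d$, re-establishing the full hypothesis at level $p+1$. Taking $p = q$ and $i = 1$ collapses $C_{q+1,1}$ to all of $A = \left[ A^1 ~\big|~ \cdots ~\big|~ A^{n^q} \right]$, so $\overline{A^{q+1,1}} = \overline{A}$, as claimed.

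The main obstacle I anticipate is bookkeeping rather than conceptual depth: one must verify that the global rank-$d$ hypothesis genuinely propagates to every intermediate block so that no truncation ever discards information, and that the index arithmetic partitioning the original blocks into the correct contiguous groups $C_{p,i}$ stays consistent from one level to the next. The single substantive point is the lossless-truncation observation, since it is the rank bound --- and not the merging identity by itself --- that guarantees $\left( U^{p,j} \Sigma^{p,j} \right)_d$ discards nothing; once that is secured, everything else reduces to repeated, essentially mechanical, application of Corollary~\ref{coro:merge}.
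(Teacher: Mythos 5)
Your proof is correct and follows essentially the same route as the paper's: induction on the level $p$, with Corollary~\ref{coro:merge} applied at each merge and the observation that $\rank(A^{p,i}) \le d$ makes the truncation $(\cdot)_d$ lossless. The only cosmetic difference is that you carry a per-block invariant $\overline{A^{p,i}} = \overline{C_{p,i}}$ and substitute blockwise via an auxiliary $ZZ^*$ argument, whereas the paper keeps the single global invariant $\overline{A} = \overline{\left[\, \overline{A^{p,1}} \,\big|\, \cdots \,\big|\, \overline{A^{p,M/n^{(p-1)}}}\, \right]}$ and simply repartitions its blocks.
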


\begin{proof}
We prove the theorem by induction on the level $p$.  To establish the base case we note that 
$$\overline{A} ~=~ \overline{\left[ \overline{(A^{1,1})_d} ~\big|~ \overline{(A^{1,2})_d} ~\big|~ \cdots ~\big|~ \overline{(A^{1,M})_d}\right]} ~=~  \overline{\left[ \overline{A^{1,1}} ~\big|~ \overline{A^{1,2}} ~\big|~ \cdots ~\big|~ \overline{A^{1,M}}\right]}$$
holds by Corollary~\ref{coro:merge}.  Now, for the purpose of induction, suppose that 
$$\overline{A} ~=~ \overline{\left[ \overline{(A^{p,1})_d} ~\big|~ \overline{(A^{p,2})_d} ~\big|~ \cdots ~\big|~ \overline{(A^{p,M/n^{(p-1)}})_d}\right]} ~=~ \overline{\left[ \overline{A^{p,1}} ~\big|~ \overline{A^{p,2}} ~\big|~ \cdots ~\big|~ \overline{A^{p,M/n^{(p-1)}}}\right]}$$
holds for some some $p \in \{ 1, \dots, q\}$.  Then, we can use the induction hypothesis and repartition the blocks of $\overline{A}$ to see that
\begin{align}
\overline{A} ~&=~ \overline{\left[ \overline{(A^{p,1})_d} ~\big|~ \overline{(A^{p,2})_d} ~\big|~ \cdots ~\big|~ \overline{(A^{p,M/n^{(p-1)}})_d}\right]} \nonumber \\
~&=~ \overline{\left[ %\left[ ( \overline{A^{p,1}} )_d ~\big| \cdots \big|~ ( \overline{A^{p,n}} )_d \right]~\Big|~ 
\cdots ~\Big|~ \big[ ( \overline{A^{p,(i-1)n+1} } )_d  \dots  ( \overline{A^{p,in}} )_d \big] ~\Big|~ \cdots  \right]}, ~i = 1, \dots, M/n^p \nonumber \\
~&=~  \overline{\left[ A^{p+1,1} ~\big|~ A^{p+1,2} ~\big|~ \cdots ~\big|~ A^{p+1,M/n^{p}} \right]},
\label{equ:Inducts}
\end{align}
where we have utilized the definition of $A^{p+1,i}$ from line 3 of Algorithm~\ref{alg:parallelSVD} to get \eqref{equ:Inducts}.  Applying Corollary~\ref{coro:merge} to the matrix in \eqref{equ:Inducts} now yields
$$\overline{A} ~=~ \overline{\left[ \overline{A^{p+1,1}} ~\big|~ \overline{A^{p+1,2}} ~\big|~ \cdots ~\big|~ \overline{A^{p+1,M/n^{p}}} \right]}.$$
Finally, we finish by noting that each $\overline{A^{p+1,i}}$ will have rank at most $d$ since $\overline{A}$ is of rank $d$.  Hence, we will also have $\overline{A} ~=~ \overline{\left[ \overline{(A^{p+1,1})_d} ~\big|~ \overline{(A^{p+1,2})_d} ~\big|~ \cdots ~\big|~ \overline{(A^{p+1,M/n^{p}})_d} \right]}$, finishing the proof.
\end{proof}

Our next objective is to understand the accuracy of Algorithm~\ref{alg:parallelSVD} when it is called with a value of $d$ that is less than rank of $A$.  To begin, we need to better understand how accurate blockwise low-rank approximations of a given matrix $A$ are.  The following lemma provides an answer.

\begin{lemma}
  Suppose $A^i \in \mathbb{C}^{D\times N_i}, i=1,2,\ldots,M$.
  Further, suppose matrix $A$ has block components
  $A=\left[A^1|A^2|\cdots|A^M\right]$, and $B$ has block components
  $B=\left[(A^1)_d | (A^2)_d |\cdots|(A^M)_d \right]$.  Then, $\|
  (B)_d - A \|_{\rm F} \leq \| (B)_d - B \|_{\rm F} +  \| B - A \|_{\rm F} \leq 3 \| (A)_d - A \|_{\rm F}$ holds for all $d \in \{ 1, \dots, D\}$.
  \label{Lemma:LowrankMerge}
\end{lemma}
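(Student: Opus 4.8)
The plan is to prove the two displayed inequalities in turn. The left inequality, $\|(B)_d - A\|_{\rm F} \le \|(B)_d - B\|_{\rm F} + \|B - A\|_{\rm F}$, is nothing more than the triangle inequality applied to the splitting $(B)_d - A = \left((B)_d - B\right) + (B - A)$, so I would dispose of it in a single line. All of the genuine work lies in bounding the right-hand sum by $3\|(A)_d - A\|_{\rm F}$, and I would accomplish this by controlling the two summands separately, establishing $\|B - A\|_{\rm F} \le \|(A)_d - A\|_{\rm F}$ and $\|(B)_d - B\|_{\rm F} \le 2\|(A)_d - A\|_{\rm F}$.

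For the first summand I would exploit the column-block structure together with the block-wise optimality of each $(A^i)_d$. Since the Frobenius norm decouples across column blocks, $\|B - A\|_{\rm F}^2 = \sum_{i=1}^M \|(A^i)_d - A^i\|_{\rm F}^2$. Introduce $C^i$ to denote the $i$-th column block of $(A)_d$, so that $(A)_d = \left[C^1 | \cdots | C^M\right]$. Because $(A)_d$ has rank at most $d$, each block $C^i$ also has rank at most $d$, and hence each $C^i$ is a legitimate rank-$d$ competitor against which the optimal approximation $(A^i)_d$ can only do better, giving $\|(A^i)_d - A^i\|_{\rm F} \le \|C^i - A^i\|_{\rm F}$. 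Summing over $i$ and again invoking the blockwise decoupling of the Frobenius norm yields $\|B - A\|_{\rm F}^2 \le \sum_{i=1}^M \|C^i - A^i\|_{\rm F}^2 = \|(A)_d - A\|_{\rm F}^2$, which is exactly the desired bound on the first summand.

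For the second summand the idea is to use the global optimality of $(B)_d$ with $(A)_d$ itself playing the role of the competing rank-$d$ matrix. Since $A$, $B$, and $(A)_d$ all share the dimensions $D \times N$ and $(A)_d$ has rank at most $d$, optimality of $(B)_d$ gives $\|(B)_d - B\|_{\rm F} \le \|(A)_d - B\|_{\rm F}$. A further triangle inequality yields $\|(A)_d - B\|_{\rm F} \le \|(A)_d - A\|_{\rm F} + \|A - B\|_{\rm F}$, and substituting the bound $\|A - B\|_{\rm F} \le \|(A)_d - A\|_{\rm F}$ just obtained produces $\|(B)_d - B\|_{\rm F} \le 2\|(A)_d - A\|_{\rm F}$. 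Adding the two summand bounds then completes the chain. There is no serious obstacle here; the only point requiring care is that both optimality arguments demand that the competing matrix genuinely have rank at most $d$ and matching dimensions. Once one observes that the column blocks of $(A)_d$ inherit its rank bound, and that $(A)_d$ is itself an admissible competitor for $B$, the remainder is routine.
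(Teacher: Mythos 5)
Your proposal is correct and follows essentially the same route as the paper: the triangle inequality for the first displayed inequality, the optimality of $(B)_d$ against the rank-$\le d$ competitor $(A)_d$ for the term $\|(B)_d - B\|_{\rm F}$, and the blockwise decoupling of the Frobenius norm with the column blocks of $(A)_d$ serving as rank-$\le d$ competitors to establish $\|B - A\|_{\rm F} \le \|(A)_d - A\|_{\rm F}$. The only cosmetic difference is that the paper chains all the estimates into a single sequence of inequalities rather than bounding the two summands separately.
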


\begin{proof}
We have that 
\begin{align*}
\| (B)_d - A \|_{\rm F} &\leq \| (B)_d - B \|_{\rm F} +  \| B - A \|_{\rm F}\\ &\leq \| (A)_d - B \|_{\rm F} +  \| B - A \|_{\rm F}\\
&\leq \| (A)_d - A \|_{\rm F} + 2 \| B - A \|_{\rm F}.
\end{align*}
Now letting $(A)^i_d \in \mathbb{C}^{D\times N_i}, i=1,2,\ldots,M$ denote the $i^{\rm th}$ block of $(A)_d$, we can see that
\begin{align*}
\| B - A \|^2_{\rm F} &= \sum^M_{i=1} \| (A^i)_d - A^i \|^2_{\rm F} \\ &\leq \sum^M_{i=1} \| (A)^i_d - A^i \|^2_{\rm F}\\ &= \| (A)_d - A \|^2_{\rm F} .
\end{align*}
Combining these two estimates now proves the desired result.
\end{proof}

We can now use Lemma~\ref{Lemma:LowrankMerge} to prove a theorem that
 will help us to bound the error produced by
Algorithm~\ref{algorithm:parallel_svd} for $q = 1$ when $d$ is chosen to be less
than the rank of $A$.  It improves over Lemma~\ref{Lemma:LowrankMerge}
(in our setting) by not implicitly assuming to have access to any
information regarding the right singular vectors of the blocks of $A$.
It also demonstrates that the proposed method is stable with respect
to additive errors by allowing (e.g., roundoff) errors, represented by
$\Psi$, to corrupt the original matrix entries.  Note that
Theorem~\ref{thm:LowrankMerge} is a strict generalization of
Corollary~\ref{coro:merge}.  Corollary~\ref{coro:merge} is recovered
from it when $\Psi$ is chosen to be the zero matrix, and $d$ is chosen
to be the rank of $A$.

\begin{thm}
Suppose that $A \in \mathbb{C}^{D\times N}$ has block components $A^i \in \mathbb{C}^{D\times N_i}, i=1,2,\ldots,M$, so that $A=\left[A^1|A^2|\cdots|A^M\right]$.
Let $B=\left[\overline{(A^1)_d} ~\big|~ \overline{(A^2)_d} ~\big|~ \cdots~\big|~ \overline{(A^M)_d} \right]$, $\Psi \in \mathbb{C}^{D\times N}$, and $B' = B + \Psi$.  Then, there exists a unitary matrix $W$ such that
$$\left\| \overline{\left( B' \right)_d} - AW \right\|_{\rm F} \leq 3\sqrt{2} \| (A)_d - A \|_{\rm F} + \left(1+ \sqrt{2} \right) \| \Psi \|_{\rm F}$$
holds for all $d \in \{ 1, \dots, D\}$.
\label{thm:LowrankMerge}
\end{thm}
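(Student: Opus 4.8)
The plan is to construct $W$ by aligning $\overline{(B')_d}$ with $A$ along the chain $(B')_d \rightarrow B' \rightarrow B \rightarrow A$, controlling each link by a unitary alignment and then composing the alignment unitaries (together with a right singular vector factor of $A$) into a single $W$. Two preliminary observations organize everything. Writing $\tilde B := \left[ (A^1)_d \mid \cdots \mid (A^M)_d \right]$ for the \emph{unbarred} truncated blocks, the definition $\overline{(A^i)_d} = (A^i)_d V^i$ shows $B = \tilde B\,\mathcal V$ with $\mathcal V := \mathrm{diag}(V^1,\dots,V^M)$ a block-diagonal unitary. Hence $\overline B$ and $\overline{\tilde B}$ coincide up to a right unitary, and $(B)_d = (\tilde B)_d\,\mathcal V$ gives $\|(B)_d - B\|_{\rm F} = \|(\tilde B)_d - \tilde B\|_{\rm F}$; this lets me trade the scrambled-by-$\mathcal V$ matrix $B$ for the directly-comparable $\tilde B$ whenever I meet it.

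The crux is a perturbation estimate for the map $\overline{(\,\cdot\,)}$. I would prove that for any $X,Y \in \mathbb{C}^{D\times N}$ there is a unitary $W$ with $\|\overline X - \overline Y\,W\|_{\rm F} \le \sqrt2\,\|X-Y\|_{\rm F}$. The mechanism is the left-polar identity $\overline X = \big[(XX^*)^{1/2} U_X \mid 0\big]$ (from $X = U_X \Sigma_X V_X^*$): choosing $W = \mathrm{diag}(U_Y^* U_X,\, I)$ collapses $\min_W \|\overline X - \overline Y\,W\|_{\rm F}$ to $\big\| (XX^*)^{1/2} - (YY^*)^{1/2}\big\|_{\rm F}$, and the sharp Frobenius-norm bound for the matrix absolute value, $\big\|(XX^*)^{1/2} - (YY^*)^{1/2}\big\|_{\rm F} \le \sqrt2\,\|X-Y\|_{\rm F}$ (its constant $\sqrt2$ is classical and sharp), finishes it. For the single \emph{additive} link $B' = B + \Psi$ I would instead use an exact alignment: taking $W = V_B^* V_{B'}$ gives $\overline B\,W = B\,V_{B'}$, so $\|\overline{B'} - \overline B\,W\|_{\rm F} = \|(B'-B)V_{B'}\|_{\rm F} = \|\Psi\|_{\rm F}$ with no $\sqrt2$ loss. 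Handling $\Psi$ this way, rather than through the generic $\sqrt2$ bound, is precisely what produces the coefficient $1+\sqrt2$ instead of $2\sqrt2$.

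With these tools the three links are routine bookkeeping. For $(B')_d \rightarrow B'$ the generic bound costs $\le \sqrt2\,\|(B')_d - B'\|_{\rm F}$; since $(B)_d$ has rank $\le d$, optimality of best rank-$d$ truncation gives $\|(B')_d - B'\|_{\rm F} \le \|(B)_d - B'\|_{\rm F} \le \|(B)_d - B\|_{\rm F} + \|\Psi\|_{\rm F} = \|(\tilde B)_d - \tilde B\|_{\rm F} + \|\Psi\|_{\rm F}$, and $\|(\tilde B)_d - \tilde B\|_{\rm F} \le \|(A)_d - \tilde B\|_{\rm F} \le 2\,\|(A)_d - A\|_{\rm F}$ using $\|\tilde B - A\|_{\rm F} \le \|(A)_d - A\|_{\rm F}$ from Lemma~\ref{Lemma:LowrankMerge}. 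So this link costs $\le \sqrt2\big(2\|(A)_d-A\|_{\rm F} + \|\Psi\|_{\rm F}\big)$. The link $B' \rightarrow B$ costs exactly $\|\Psi\|_{\rm F}$. For $B \rightarrow A$, trading $\overline B$ for $\overline{\tilde B}$ and applying the generic bound to $\tilde B$ and $A$ (with $\overline A = A V_A$ absorbed into $W$) costs $\le \sqrt2\,\|\tilde B - A\|_{\rm F} \le \sqrt2\,\|(A)_d - A\|_{\rm F}$. Summing the three costs gives $\big(2\sqrt2+\sqrt2\big)\|(A)_d-A\|_{\rm F} + \big(\sqrt2+1\big)\|\Psi\|_{\rm F}$, which is exactly the claimed inequality.

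I expect the only real obstacle to be the perturbation bound for $\overline{(\,\cdot\,)}$ — in particular the reduction to the matrix absolute value via the left-polar identity and the sharp inequality $\|(XX^*)^{1/2} - (YY^*)^{1/2}\|_{\rm F} \le \sqrt2\,\|X-Y\|_{\rm F}$; once this and the exact additive alignment are in hand, the rest is triangle-inequality and truncation-optimality bookkeeping that essentially re-uses the estimate behind Lemma~\ref{Lemma:LowrankMerge}. As a consistency check, setting $\Psi = 0$ and $d = \rank(A)$ forces $\|(A)_d - A\|_{\rm F} = 0$, $\tilde B = A$, and $(B')_d = B$, so the bound collapses to $\overline{(B')_d} = AW$ and recovers Corollary~\ref{coro:merge}.
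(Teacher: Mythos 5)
Your proof is correct, and it reaches the stated bound by a genuinely different route than the paper. The paper performs a single unitary alignment up front: it sets $A' = \left[\overline{A^1}\,|\,\cdots\,|\,\overline{A^M}\right]$, uses Corollary~\ref{coro:merge} to write $A' = \overline{A}W''$, and thereby reduces everything to bounding $\| (B')_d - A'\|_{\rm F}$; the $\sqrt{2}$ factors then come from an elementary but fiddly step in which the tail $\sqrt{\sum_{j>d}\sigma_j^2(B+\Psi)}$ is split into separate $B$ and $\Psi$ contributions via Weyl's inequality $\sigma_{i+j-1}(B+\Psi)\le\sigma_i(B)+\sigma_j(\Psi)$ applied with a pairing of indices, after which Lemma~\ref{Lemma:LowrankMerge} supplies the factor $3$. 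You instead chain three separately aligned links $(B')_d\to B'\to B\to A$ and extract the $\sqrt{2}$ from a single reusable Lipschitz estimate for $X\mapsto\overline{X}$ modulo right unitaries, namely $\min_W\|\overline{X}-\overline{Y}W\|_{\rm F}\le\|(XX^*)^{1/2}-(YY^*)^{1/2}\|_{\rm F}\le\sqrt{2}\,\|X-Y\|_{\rm F}$ via the identity $\overline{X}=\bigl[(XX^*)^{1/2}U_X\mid 0\bigr]$ and the Araki--Yamagami inequality for the matrix absolute value; your exact alignment of the purely additive link $B'\to B$ is what keeps the $\Psi$ coefficient at $1+\sqrt{2}$, playing the role that the paper's separate Weyl treatment of $\sigma_j(\Psi)$ plays there. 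The trade-off: your argument is more modular and conceptually transparent (the perturbation bound for $\overline{(\cdot)}$ is a tool one could reuse, e.g., in Theorem~\ref{thm:LowRankRecovery}), but it imports a nontrivial classical inequality whose sharp constant $\sqrt{2}$ must be cited or proved, whereas the paper's argument is self-contained modulo the standard Weyl inequality. Your bookkeeping for the truncation terms ($\|(B)_d-B\|_{\rm F}=\|(\tilde{B})_d-\tilde{B}\|_{\rm F}\le 2\|(A)_d-A\|_{\rm F}$ and $\|\tilde{B}-A\|_{\rm F}\le\|(A)_d-A\|_{\rm F}$) reproduces exactly the estimates inside the proof of Lemma~\ref{Lemma:LowrankMerge}, and the constants assemble to the identical bound $3\sqrt{2}\,\|(A)_d-A\|_{\rm F}+(1+\sqrt{2})\|\Psi\|_{\rm F}$.
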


\begin{proof}
Let $A' = \left[\overline{A^1} ~\big|~ \overline{A^2} ~\big|~\cdots ~\big|~ \overline{A^M}\right]$.  Note that $\overline{A'} = \overline{A}$ by Corollary~\ref{coro:merge}.  Thus, there exists a unitary matrix $W''$ such that $A' = \overline{A} W''$.  Using this fact in combination with the unitary invariance of the Frobenius norm, one can now see that
$$\left\| \left( B' \right)_d - A' \right\|_{\rm F} ~=~ \left\| \left( B' \right)_d - \overline{A} W'' \right\|_{\rm F} ~=~\left\| \overline{\left( B' \right)_d} - \overline{A}W' \right\|_{\rm F} = \left\| \overline{\left( B' \right)_d} - AW \right\|_{\rm F}$$
for some unitary matrixes $W'$ and $W$.  Hence, it suffices to bound $\left\| \left( B' \right)_d - A' \right\|_{\rm F}$.  

Proceeding with this goal in mind we can see that
\begin{align*}
\left\| \left( B' \right)_d - A' \right\|_{\rm F} ~&\leq~ \left\| \left( B' \right)_d - B' \right\|_{\rm F} + \left\| B'  - B \right\|_{\rm F} + \left\| B - A' \right\|_{\rm F}\\
~&=~ \sqrt{\sum^D_{j = d+1} \sigma^2_j(B + \Psi)} ~+~ \left\| \Psi \right\|_{\rm F} +  \left\| B - A' \right\|_{\rm F}\\
%~&\leq~ \sqrt{\sum^D_{j = d+1} \left(\sigma_j(B) + \sigma_1(\Psi) \right)^2} ~+~ \left\| \Psi \right\|_{\rm F} +  \left\| B - A' \right\|_{\rm F},
~&=~ \sqrt{\sum^{\left\lceil \frac{D-d}{2} \right\rceil}_{j = 1} \sigma^2_{d+2j-1}(B + \Psi) + \sigma^2_{d+2j}(B + \Psi)} ~+~ \left\| \Psi \right\|_{\rm F} +  \left\| B - A' \right\|_{\rm F}\\
~&\leq~\sqrt{\sum^{\left\lceil \frac{D-d}{2} \right\rceil}_{j = 1} \left(\sigma_{d+j}(B) + \sigma_{j}(\Psi) \right)^2 + \left( \sigma_{d+j}(B) + \sigma_{j+1}(\Psi)\right)^2} ~+~ \left\| \Psi \right\|_{\rm F} +  \left\| B - A' \right\|_{\rm F}
\end{align*}
where the last inequality results from an application of Weyl's inequality to the first term \cite{horn1991topics}.  Utilizing the triangle inequality on the first term now implies that 
\begin{align*}
\left\| \left( B' \right)_d - A' \right\|_{\rm F} ~&\leq~ \sqrt{\sum^D_{j = d+1} 2 \sigma^2_j(B) } ~+~ \sqrt{\sum^D_{j = 1} 2 \sigma^2_j(\Psi) }  + \left\| \Psi \right\|_{\rm F} +  \left\| B - A' \right\|_{\rm F}\\
&\leq~ \sqrt{2} \left( \| (B)_d - B \|_{\rm F} +  \| B - A' \|_{\rm F} \right)+ \left(1+ \sqrt{2} \right) \| \Psi \|_{\rm F}.
\end{align*}
Applying Lemma~\ref{Lemma:LowrankMerge} to bound the first two terms now concludes the proof after noting that $\| (A')_d - A' \|_{\rm F} =  \| (A)_d - A \|_{\rm F}$.
\end{proof}

This final theorem bounds the total error of the general $q$-level hierarchical 
Algorithm~\ref{algorithm:parallel_svd} with respect to the true matrix
$A$ (up to multiplication by a unitary matrix on the right).  The structure of
its proof is similar to that of Theorem \ref{thm:FullRankRecovery}.
% except that Theorem~\ref{thm:LowrankMerge} takes the place of
% Corollary~\ref{coro:merge} in the inductive argument.

\begin{thm}
  Let $A \in \mathbb{C}^{D\times N}$ and $q \geq 1$.  Then,
  Algorithm~\ref{alg:parallelSVD} is guaranteed to recover an
  $A^{q+1,1} \in \mathbb{C}^{D\times N}$ such that
  $\overline{\left(A^{q+1,1} \right)_d} = A W + \Psi$, where $W$ is a
  unitary matrix, and $\| \Psi \|_{\rm F} \leq \left( \left( 1 +
  \sqrt{2} \right)^{q+1} - 1 \right) \| (A)_d - A \|_{\rm F}$.
  \label{thm:LowRankRecovery}
\end{thm}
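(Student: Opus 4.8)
The plan is to mirror the inductive structure of the proof of Theorem~\ref{thm:FullRankRecovery}, replacing the exact identities $\overline{A} = \overline{[\cdots]}$ used there with approximate versions measured in Frobenius norm up to a right unitary. Throughout write $\rho := \|(A)_d - A\|_{\rm F}$, and for two matrices $X,Y$ (padded with zero columns so that their sizes agree) let $\operatorname{dist}(X,Y) := \min_{W}\|X - YW\|_{\rm F}$, the minimum taken over unitary $W$; this is the natural quantity compatible with the $\overline{\cdot}$ convention introduced before Corollary~\ref{coro:merge}. The only structural fact I need about $\operatorname{dist}$ is a chaining inequality: if $X = YW_1 + E_1$ and $Y = ZW_2 + E_2$ for unitary $W_1, W_2$, then $X = Z(W_2 W_1) + (E_2 W_1 + E_1)$, so $\operatorname{dist}(X,Z) \le \|E_1\|_{\rm F} + \|E_2\|_{\rm F}$ by unitary invariance of $\|\cdot\|_{\rm F}$. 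This is what lets per-level errors be composed across the hierarchy.

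First I would set up the induction on the level index, propagating the quantity $\varepsilon_p := \operatorname{dist}\!\big([\overline{(A^{p,1})_d} \mid \cdots \mid \overline{(A^{p,M/n^{p-1}})_d}],\ A\big)$, the distance between the full concatenation of the rank-$d$ truncated level-$p$ blocks and $\overline{A}$. The base case records that the leaves $A^{1,i} = A^i$ are exact, so $\varepsilon_1$ is controlled purely by the blockwise rank-$d$ truncation; the sub-block estimate proved inside Lemma~\ref{Lemma:LowrankMerge} (namely $\sum_i \|(A^i)_d - A^i\|_{\rm F}^2 \le \|(A)_d - A\|_{\rm F}^2$, valid for any column partition of $A$) bounds this by $\rho$. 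For the inductive step I would repartition $\overline{A}$ into $M/n^p$ groups of $n$ consecutive blocks exactly as in \eqref{equ:Inducts}, observe that forming $A^{p+1,i}$ and then truncating it is precisely the merge-then-truncate operation analyzed in Theorem~\ref{thm:LowrankMerge}, and apply that theorem to each group with the accumulated error placed into the additive slot $\Psi$. Aggregating the per-group bounds in $\ell^2$ and using the sub-block estimate again (so that the groups partition the columns and no factor of $n$ or $M$ survives) should yield a linear recurrence of the form $\varepsilon_{p+1} \le (1+\sqrt{2})\,\varepsilon_p + \sqrt{2}\,\rho$.

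Granting that recurrence, the rest is mechanical. The algorithm performs $q$ truncation rounds inside the loop together with the final SVD of $A^{q+1,1}$, i.e.\ $q+1$ rounds in all, so with $\varepsilon_0 = 0$ the solution of the recurrence is
\[
  \varepsilon_{q+1} \le \sqrt{2}\,\rho\cdot\frac{(1+\sqrt{2})^{\,q+1}-1}{(1+\sqrt{2})-1} = \big((1+\sqrt{2})^{\,q+1}-1\big)\,\rho .
\]
Unwinding $\operatorname{dist}$ at the top level, the minimizing unitary is the claimed $W$ and the residual is $\Psi := \overline{(A^{q+1,1})_d} - AW$, giving $\|\Psi\|_{\rm F} = \varepsilon_{q+1}$ and hence the stated bound.

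The step I expect to be the real obstacle is pinning the recurrence constants down to exactly $(1+\sqrt{2})$ and $\sqrt{2}\,\rho$. Using Theorem~\ref{thm:LowrankMerge} as a black box at every level reintroduces its additive $3\sqrt{2}\,\rho$ term (which carries the factor-of-$3$ from Lemma~\ref{Lemma:LowrankMerge}) at each of the $q+1$ rounds, and the geometric sum then only delivers the weaker bound $3\big((1+\sqrt{2})^{q+1}-1\big)\rho$. The sharper constant requires charging each rank-$d$ truncation exactly once: the blockwise truncation of the children, and the portion of the merged tail $\sum_{j>d}\sigma_j^2$ that is inherited from the previous level, must be routed through $\Psi$ (so they are amplified by $1+\sqrt{2}$ and already counted in $\varepsilon_p$), while only the part of the tail attributable to $A$ itself failing to be rank $d$ is billed as fresh error and bounded by $\sqrt{2}\,\rho$. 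Achieving this clean separation means opening up the Weyl-inequality and singular-value-pairing step inside the proof of Theorem~\ref{thm:LowrankMerge} rather than invoking the theorem wholesale, and then checking that the two uses of the sub-block estimate—one for $A$'s own tail, one for the child errors—combine without reincurring the extra constants. The remaining bookkeeping, namely zero-padding the rank-$\le d$ blocks so that column counts match across levels and verifying that the block-diagonal right-unitary freedom lets each group be rotated independently, is routine given the conventions already established for $\overline{\cdot}$.
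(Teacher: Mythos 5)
Your plan is correct and is essentially the paper's own argument: the paper also inducts on the level, writes each computed block as $A^{p,i}W^{p,i}+\Psi^{p,i}$ with block-diagonal unitaries, re-runs the Weyl/singular-value-pairing step directly (rather than citing Theorem~\ref{thm:LowrankMerge} wholesale, exactly for the reason you identify) so that only $\sqrt{2}\,\|(A^{p+1,i})_d-A^{p+1,i}\|_{\rm F}$ is billed as fresh error while the children's contributions enter through the $(1+\sqrt{2})\|\tilde\Psi\|_{\rm F}$ term, and closes the recurrence $c_{p+1}=\sqrt{2}+(1+\sqrt{2})c_p$ to get $(1+\sqrt{2})^{q+1}-1$. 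The only cosmetic difference is that the paper carries per-block bounds in terms of the local tails $\|(A^{p,i})_d-A^{p,i}\|_{\rm F}$ rather than your single global $\varepsilon_p$, which is equivalent after the Minkowski aggregation and sub-block estimate you describe.
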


\begin{proof}
  Within the confines of this proof we will always refer to the
  approximate matrix $A^{p+1,i}$ from line 3 of
  Algorithm~\ref{alg:parallelSVD} as $$B^{p+1,i} := \left[
    \overline{\left( B^{p,(i-1)n+1} \right)_d} ~\Big| \cdots \Big|~
    \overline{\left( B^{p,in} \right)_d} \right],$$ for $p = 1,\dots,
  q$, and $i = 1, \dots, M/n^p$.  Conversely, $A$ will always refer to
  the original (potentially full rank) matrix with block components
  $A=\left[A^1|A^2|\cdots|A^M\right]$, where $M = n^q$.  Furthermore,
  $A^{p,i}$ will always refer to the error free block of the original
  matrix $A$ whose entries correspond to the entries included in
  $B^{p,i}$.
  \footnote{That is, $B^{p,i}$ is used to approximate the singular
    values and left singular vectors of $A^{p,i}$ for all $p =
    1,\dots, q+1$, and $i = 1, \dots, M/n^{p-1}$} Thus, $A =
  \left[A^{p,1}|A^{p,2}|\cdots|A^{p,M/n^{(p-1)}}\right]$ holds for all
  $p = 1, \dots, q+1$, where $$A^{p+1,i} := \left[ A^{p,(i-1)n+1}
    ~\Big| \cdots \Big|~ A^{p,in} \right]$$ for all $p = 1,\dots, q$,
  and $i = 1, \dots, M/n^p$.  For $p=1$ we have $B^{1,i} = A^i =
  A^{1,i}$ for $i = 1, \dots, M$ by definition as per
  Algorithm~\ref{alg:parallelSVD}.  Our ultimate goal is to bound the
  renamed $\overline{\left(B^{q+1,1} \right)_d}$ matrix from lines 5
  and 6 of Algorithm~\ref{alg:parallelSVD} with respect to the
  original matrix $A$.  We will do this by induction on the level $p$.
  More specifically, we will prove that
\begin{enumerate}
\item $\overline{\left( B^{p,i} \right)_d} = A^{p,i} W^{p,i} + \Psi^{p,i}$, where 
\item $W^{p,i}$ is always a unitary matrix, and 
\item $\| \Psi^{p,i} \|_{\rm F} \leq \left( \left( 1 + \sqrt{2} \right)^{p} - 1 \right) \left \| (A^{p,i})_d - A^{p,i} \right \|_{\rm F}$,
\end{enumerate}
holds for all $p = 1,\dots, q+1$, and $i = 1, \dots, M/n^{(p-1)}$.

Note that conditions $1-3$ above are satisfied for $p=1$ since $B^{1,i} = A^i = A^{1,i}$ for all $i = 1, \dots, M$ by definition.  Thus, there exist unitary $W^{1,i}$ for all $i = 1, \dots, M$ such that
$$\overline{ \left( B^{1,i} \right)_d} ~=~ \overline{\left(  A^{1,i} \right)_d} ~=~ \left(  A^{1,i} \right)_d W^{1,i} ~=~ A^{1,i} W^{1,i} + \left(  \left(  A^{1,i} \right)_d - A^{1,i} \right) W^{1,i},$$
where $\Psi^{1,i} :=  \left(  \left(  A^{1,i} \right)_d - A^{1,i} \right) W^{1,i}$ has 
\begin{equation}
\| \Psi^{1,i} \|_{\rm F} = \left \| \left(  A^{1,i} \right)_d - A^{1,i} \right \|_{\rm F} \leq \sqrt{2} \left \| \left(  A^{1,i} \right)_d - A^{1,i} \right \|_{\rm F}.
\label{equ:StabModHere}
\end{equation}
Now suppose that conditions $1-3$ hold for some $p \in \{ 1, \dots, q\}$.  Then, one can see from condition 1 that
\begin{align*}
B^{p+1,i} ~&:=~ \left[ \overline{\left( B^{p,(i-1)n+1} \right)_d} ~\Big| \cdots \Big|~ \overline{\left( B^{p,in} \right)_d} \right]\\
&=~\left[ A^{p,(i-1)n+1} W^{p,(i-1)n+1} + \Psi^{p,(i-1)n+1}  ~\Big| \cdots \Big|~ A^{p,in} W^{p,in} + \Psi^{p,in}  \right]\\
&=~\left[ A^{p,(i-1)n+1} W^{p,(i-1)n+1} ~\Big| \cdots \Big|~ A^{p,in} W^{p,in} \right] + \left[  \Psi^{p,(i-1)n+1}  ~\Big| \cdots \Big|~ \Psi^{p,in}  \right] \\
&=~\left[ A^{p,(i-1)n+1} ~\Big| \cdots \Big|~ A^{p,in}  \right]  \tilde{W} + \tilde{\Psi},
\end{align*}
where $\tilde{\Psi} := \left[  \Psi^{p,(i-1)n+1}  ~\Big| \cdots \Big|~ \Psi^{p,in}  \right] $, and 
\[ 
  \arraycolsep=1.4pt\def\arraystretch{0.3}
  \! \! \tilde{W} := \! \! \! = \! \! \!
    \left(  \! \! 
    \begin{array}{c;{2pt/2pt}c;{2pt/2pt}c;{2pt/2pt}c}
        & & & \\
        & & & \\
        ~W^{p,(i-1)n+1} & {\bf 0} & {\bf 0} &  {\bf 0} \\ 
        \vspace{0.05in} \\ \hdashline[2pt/2pt] %\vspace{0.05in} \\
        {\bf 0} & W^{p,(i-1)n+2}  & {\bf 0} &  {\bf 0} \\ 
        \vspace{0.05in} \\ \hdashline[2pt/2pt]
        {\bf 0} & {\bf 0} & ~\ddots~ &  {\bf 0} \\ 
        \vspace{0.05in} \\ \hdashline[2pt/2pt]
        {\bf 0} & {\bf 0} & {\bf 0} &  W^{p,in}~
            \end{array}  \! \!   \right) \! \!.    \] 
Note that $\tilde{W}$ is unitary since its diagonal blocks are all unitary by condition 2.  Therefore, we have $B^{p+1,i} = A^{p+1,i} \tilde{W} + \tilde{\Psi}.$  

We may now bound $\left \| \left( B^{p+1,i} \right)_d - A^{p+1,i} \tilde{W}\right \|_{\rm F}$ using a similar argument to that employed in the proof of Theorem~\ref{thm:LowrankMerge}.
\begin{align}
\left \| \left( B^{p+1,i} \right)_d - A^{p+1,i} \tilde{W}\right \|_{\rm F} ~&\leq~ \left\| \left( B^{p+1,i} \right)_d - B^{p+1,i} \right\|_{\rm F} + \left\| B^{p+1,i}  - A^{p+1,i} \tilde{W} \right\|_{\rm F} \nonumber\\
~&=~ \sqrt{\sum^D_{j = d+1} \sigma^2_j \left(A^{p+1,i} \tilde{W} + \tilde{\Psi} \right)} ~+~ \| \tilde{\Psi} \|_{\rm F} \nonumber \\
%~&=~ \sqrt{\sum^{\left\lceil \frac{D-d}{2} \right\rceil}_{j = 1} \sigma^2_{d+2j-1}(A^{p+1,i} \tilde{W} + \tilde{\Psi}) + \sigma^2_{d+2j}(A^{p+1,i} \tilde{W} + \tilde{\Psi})} ~+~ \| \tilde{\Psi} \|_{\rm F}\\
%~&\leq~\sqrt{\sum^{\left\lceil \frac{D-d}{2} \right\rceil}_{j = 1} \left(\sigma_{d+j}(A^{p+1,i} \tilde{W}) + \sigma_{j}(\Psi) \right)^2 + \left( \sigma_{d+j}(A^{p+1,i} \tilde{W}) + \sigma_{j+1}(\Psi)\right)^2} ~+~ \left\| \tilde{\Psi} \right\|_{\rm F}\\
~&\leq~ \sqrt{\sum^D_{j = d+1} 2 \sigma^2_j \left(A^{p+1,i} \tilde{W} \right) } ~+~ \sqrt{\sum^D_{j = 1} 2 \sigma^2_j( \tilde{\Psi}) }  + \| \tilde{\Psi} \|_{\rm F} \nonumber \\
~&=~ \sqrt{2} \left \| A^{p+1,i} - \left( A^{p+1,i} \right)_d \right \|_{\rm F} + \left(1+ \sqrt{2} \right) \| \tilde{\Psi} \|_{\rm F} \label{equ:LastthmRef1}.
\end{align}
Appealing to condition 3 in order to bound  $\| \tilde{\Psi} \|_{\rm F}$ we obtain
\begin{align*}
\| \tilde{\Psi} \|^2_{\rm F} ~=~ \sum^{n}_{j=1} \| \Psi^{p,(i-1)n+j} \|^2_{\rm F} &\leq \left( \left( 1 + \sqrt{2} \right)^{p} - 1 \right)^2 \sum^{n}_{j=1} \left \| (A^{p,(i-1)n+j})_d - A^{p,(i-1)n+j} \right \|^2_{\rm F}\\
&\leq \left( \left( 1 + \sqrt{2} \right)^{p} - 1 \right)^2 \sum^{n}_{j=1} \left \| (A^{p+1,i})^j_d - A^{p,(i-1)n+j} \right \|^2_{\rm F},
\end{align*}
where $(A^{p+1,i})^j_d$ denotes the block of $( A^{p+1,i} )_d$ corresponding to $A^{p,(i-1)n+j}$ for $j = 1, \dots, n$.  Thus, we have that
\begin{align}
\| \tilde{\Psi} \|^2_{\rm F} &\leq \left( \left( 1 + \sqrt{2} \right)^{p} - 1 \right)^2 \sum^{n}_{j=1} \left \| (A^{p+1,i})^j_d - A^{p,(i-1)n+j} \right \|^2_{\rm F} \nonumber \\ 
&= \left( \left( 1 + \sqrt{2} \right)^{p} - 1 \right)^2 \left \| (A^{p+1,i})_d - A^{p+1,i} \right \|^2_{\rm F} \label{equ:LastthmRef2}.
\end{align}
Combining \eqref{equ:LastthmRef1} and \eqref{equ:LastthmRef2} we can finally see that
\begin{align}
\left \| \left( B^{p+1,i} \right)_d - A^{p+1,i} \tilde{W}\right \|_{\rm F} &\leq \left[ \sqrt{2} + (1+ \sqrt{2} ) \left( \left( 1 + \sqrt{2} \right)^{p} - 1 \right) \right] \left \| \left( A^{p+1,i} \right)_d - A^{p+1,i}  \right \|_{\rm F} \nonumber\\
&= \left( \left( 1 + \sqrt{2} \right)^{p+1} - 1 \right) \left \| \left( A^{p+1,i} \right)_d - A^{p+1,i} \right \|_{\rm F} \label{equ:LastthmRef3}.
\end{align}
Note that $\left \| \left( B^{p+1,i} \right)_d - A^{p+1,i} \tilde{W}\right \|_{\rm F} = \left \| \overline{ \left( B^{p+1,i} \right)_d } - A^{p+1,i} W^{p+1,i}  \right \|_{\rm F}$ where $W^{p+1,i}$ is unitary.  Hence, we can see that conditions 1 - 3 hold for $p+1$ with $\Psi^{p+1,i} := \overline{ \left( B^{p+1,i} \right)_d } - A^{p+1,i} W^{p+1,i}$.
\end{proof}

Theorem~\ref{thm:LowRankRecovery} proves that Algorithm~\ref{alg:parallelSVD} will accurately compute low rank approximations of $\overline{A}$ whenever $q$ is chosen to be relatively small.  Thus, Algorithm~\ref{alg:parallelSVD} provides a distributed and incremental method for rapidly and accurately computing any desired number of dominant singular values/left singular vectors of $A$.   Furthermore, it is worth mentioning that the proof of Theorem~\ref{thm:LowRankRecovery} can be modified in order to prove that the $q$-level hierarchical Algorithm~\ref{alg:parallelSVD} is also robust/stable with respect to additive contamination of the initial input matrix $A$ by, e.g., round-off errors.  This can be achieved most easily by noting that the inequality in \eqref{equ:StabModHere} will still hold if 
$A^i = A^{1,i}$ is contaminated with arbitrary additive errors $E^i \in \mathbb{C}^{D \times N_i}$ having $\| E^i \|_{\rm F} \leq \frac{\sqrt{2} - 1}{\sqrt{D-d}+1} \left \| \left(  A^{1,i} \right)_d - A^{1,i} \right \|_{\rm F}$ for all $i = 1, \dots, M$.  We summarize this modification in the next lemma.

\begin{lemma}
Let $i \in [M]$.  Suppose that $B^{1,i} = A^{1,i} + E^i = A^i + E^i$ holds for $B^{1,i}, A^{1,i}, A^i, E^i \in \mathbb{C}^{D \times N_i}$, where $\| E^i \|_{\rm F} \leq \frac{\sqrt{2} - 1}{\sqrt{D-d}+1} \left \| \left(  A^{1,i} \right)_d - A^{1,i} \right \|_{\rm F}$.  Then, there exists a unitary matrix $W^{1,i} \in \mathbb{C}^{N_i \times N_i}$ such that 
$$\left \| \overline{ \left(  B^{1,i} \right)_d } - A^{1,i} W^{1,i}  \right\|_{\rm F} \leq  \sqrt{2} \left \| \left(  A^{1,i} \right)_d - A^{1,i} \right \|_{\rm F}.$$
In particular, the inequality in \eqref{equ:StabModHere} will still hold with $\Psi^{1,i} :=  \left[  \left(  B^{1,i} \right)_d - A^{1,i} \right] W^{1,i}$.
\label{lem:stabmodthm}
\end{lemma}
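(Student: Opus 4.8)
The plan is to reduce the statement to a bound on $\left\| \left( B^{1,i} \right)_d - A^{1,i} \right\|_{\rm F}$, and then to control the tail singular values of the perturbed block $B^{1,i} = A^{1,i} + E^i$ via Weyl's inequality. This is essentially the base-case computation leading to \eqref{equ:StabModHere} in the proof of Theorem~\ref{thm:LowRankRecovery}, except that the additive error $E^i$ is now carried explicitly through the estimate instead of being assumed zero.

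First I would fix $W^{1,i} \in \mathbb{C}^{N_i \times N_i}$ to be the right-singular-vector matrix implicit in the definition of $\overline{\left( B^{1,i} \right)_d}$, so that $\overline{\left( B^{1,i} \right)_d} = \left( B^{1,i} \right)_d W^{1,i}$. With this choice one has $\overline{\left( B^{1,i} \right)_d} - A^{1,i} W^{1,i} = \left[ \left( B^{1,i} \right)_d - A^{1,i} \right] W^{1,i}$, which is exactly the matrix $\Psi^{1,i}$ named in the statement, and unitary invariance of the Frobenius norm gives $\left\| \overline{\left( B^{1,i} \right)_d} - A^{1,i} W^{1,i} \right\|_{\rm F} = \left\| \left( B^{1,i} \right)_d - A^{1,i} \right\|_{\rm F}$. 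The remaining task is therefore purely to bound this last quantity.

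Next I would split $\left\| \left( B^{1,i} \right)_d - A^{1,i} \right\|_{\rm F} \leq \left\| \left( B^{1,i} \right)_d - B^{1,i} \right\|_{\rm F} + \left\| B^{1,i} - A^{1,i} \right\|_{\rm F}$, where the second term equals $\| E^i \|_{\rm F}$. For the first term I would write $\left\| \left( B^{1,i} \right)_d - B^{1,i} \right\|_{\rm F} = \sqrt{\sum_{j=d+1}^D \sigma_j^2(B^{1,i})}$ and apply Weyl's inequality in the form $\sigma_j(A^{1,i} + E^i) \leq \sigma_j(A^{1,i}) + \sigma_1(E^i) \leq \sigma_j(A^{1,i}) + \| E^i \|_{\rm F}$. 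An $\ell^2$ triangle inequality over the tail indices $j = d+1, \dots, D$ then yields $\left\| \left( B^{1,i} \right)_d - B^{1,i} \right\|_{\rm F} \leq \left\| \left( A^{1,i} \right)_d - A^{1,i} \right\|_{\rm F} + \sqrt{D-d}\, \| E^i \|_{\rm F}$, so that altogether $\left\| \left( B^{1,i} \right)_d - A^{1,i} \right\|_{\rm F} \leq \left\| \left( A^{1,i} \right)_d - A^{1,i} \right\|_{\rm F} + \left( \sqrt{D-d}+1 \right) \| E^i \|_{\rm F}$.

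Finally I would invoke the hypothesis $\| E^i \|_{\rm F} \leq \frac{\sqrt{2}-1}{\sqrt{D-d}+1} \left\| \left( A^{1,i} \right)_d - A^{1,i} \right\|_{\rm F}$, which was engineered precisely so that $\left( \sqrt{D-d}+1 \right) \| E^i \|_{\rm F} \leq \left( \sqrt{2}-1 \right) \left\| \left( A^{1,i} \right)_d - A^{1,i} \right\|_{\rm F}$; substituting produces the claimed factor of $\sqrt{2}$. I expect the only genuine subtlety to be the reduction step: one must resolve the ambiguity in $\overline{\cdot}$ by taking $W^{1,i}$ to be the same right-singular-vector matrix used to form $\overline{\left( B^{1,i} \right)_d}$, so that the two occurrences of $W^{1,i}$ coincide and the difference collapses to $\left[ \left( B^{1,i} \right)_d - A^{1,i} \right] W^{1,i}$. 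I would also note that the Weyl step here deliberately uses the crude spectral-norm estimate $\sigma_1(E^i) \leq \| E^i \|_{\rm F}$ rather than the sharper pairing argument of Theorem~\ref{thm:LowrankMerge}; it is exactly this cruder bound, with its $\sqrt{D-d}$ factor, that the denominator $\sqrt{D-d}+1$ in the hypothesis is designed to absorb.
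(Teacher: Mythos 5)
Your proposal is correct and follows essentially the same route as the paper's own proof: reduce to $\left\| \left( B^{1,i} \right)_d - A^{1,i} \right\|_{\rm F}$ via the unitary $W^{1,i}$, split off $\| E^i \|_{\rm F}$ by the triangle inequality, bound the tail singular values of $A^{1,i} + E^i$ with Weyl's inequality and the crude estimate $\sigma_1(E^i) \leq \| E^i \|_{\rm F}$, and absorb the resulting $\left(\sqrt{D-d}+1\right)\| E^i \|_{\rm F}$ term using the hypothesis. Your explicit remarks on resolving the ambiguity in $\overline{\,\cdot\,}$ and on why the denominator $\sqrt{D-d}+1$ appears are accurate glosses on steps the paper performs implicitly.
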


\begin{proof}
We have that $\left(  B^{1,i} \right)_d = A^{1,i} + \left[ \left(  B^{1,i} \right)_d - A^{1,i} \right]$ such that there exists unitary matrix $W^{1,i}$ with 
$$\overline{ \left( B^{1,i} \right)_d} ~=~ A^{1,i} W^{1,i} + \left[ \left(  B^{1,i} \right)_d - A^{1,i} \right] W^{1,i}.$$
As a result we have that
\begin{align*}
\left \| \overline{ \left(  B^{1,i} \right)_d } - A^{1,i} W^{1,i}  \right\|_{\rm F} &= \left \| \left(  B^{1,i} \right)_d - A^{1,i} \right\|_{\rm F}\\
&= \left \| \left(  A^{1,i} + E^i \right)_d - A^{1,i} \right\|_{\rm F}\\
&\leq \left \| \left(  A^{1,i} + E^i \right)_d - (A^{1,i} + E^i) \right\|_{\rm F} +  \left \| E^i  \right\|_{\rm F}\\
&= \sqrt{\sum^D_{j=d+1} \sigma^2_j\left( A^{1,i} + E^i \right)} +  \left \| E^i  \right\|_{\rm F}\\
&\leq \sqrt{\sum^D_{j=d+1} \left( \sigma_j \left( A^{1,i} \right) + \sigma_1\left( E^i \right) \right)^2 } +  \left \| E^i  \right\|_{\rm F}\\
&\leq  \left \| \left(  A^{1,i} \right)_d - A^{1,i}  \right\|_{\rm F} +  (1+\sqrt{D-d}) \left \| E^i  \right\|_{\rm F},
\end{align*}
where the last two inequalities utilize Weyl's inequality and the triangle inequality, respectively.  The desired result now follows from our assumed bound on $\| E^i \|_{\rm F}$.
\end{proof}

Combining Lemma~\ref{lem:stabmodthm} with the proof of Theorem~\ref{thm:LowRankRecovery} allows one to see that the statement of Theorem~\ref{thm:LowRankRecovery} will continue to hold even when $A$ is contaminated with small additive errors.  Having proven that Algorithm~\ref{alg:parallelSVD} is accurate, we are now free to consider its computational costs.

\subsection{Parallel Cost Model and Collectives}
To analyze the parallel communication cost of the hierarchical SVD
algorithm, the $\alpha$ -- $\beta$ -- $\gamma$ model for
distributed--memory parallel computation \cite{CPE1206} is used.  The
parameters $\alpha$ and $\beta$ respectively represent the latency
cost and the transmission cost of sending a floating point number
between two processors.  The parameter $\gamma$ represents the time
for one floating point operation (FLOP).

The $q$-level hierarchical Algorithm~\ref{algorithm:parallel_svd} seeks
to find the $d$ largest singular values and left singular vectors of a
matrix $A$.  If the matrix $A$ is decomposed into $M = n^q$ blocks,
where $n$ being the number of local SVDs being concatenated at each level,
the send/receive communication cost for the algorithm is 
\begin{align*}
  q\left(\alpha  + d\,(n-1)\,D\beta\right),
\end{align*}
assuming that the data is already distributed on the compute nodes and
no scatter command is required.  If the (distributed) right singular
vectors are needed, then a broadcast of the left singular vectors to
all nodes incurs a communication cost of $\alpha + d\, M \, \beta$.

Suppose $A$ is a $D\times N$ matrix, $N\gg D$.  The sequential SVD is
typically performed in two phases: bidiagonalization (which requires
$(2N\,D^2 + 2 D^3)$ flops) followed by diagonalization (negligible
cost).  If $M$ processing cores are available to compute the $q$-level
hierarchical SVD method in Algorithm~\ref{algorithm:parallel_svd}, and
the matrix $A$ is decomposed into $M=n^q$ blocks, where $n$ is again
the number of local SVDs being concatenated at each level.  The potential
parallel speedup can be approximated by
\begin{align}
  \begin{cases}
    \displaystyle
  \frac{(2 ND^2 + 2 D^3)\gamma} {\gamma\left[(2 (N/M)D^2 + 2D^3) + q(2dnD^2
      + 2D^3)\right] +q (\alpha + d(n-1)D\beta)}.
  & \text{if } nd > D\\
  \displaystyle
  \frac{(2 ND^2 + 2 D^3)\gamma} {\gamma\left[(2 (N/M)D^2 + 2D^3) + q(2(dn)^2D
      + 2(dn)^3)\right] +q (\alpha + d(n-1)D\beta)}.
  & \text{if } nd < D
  \end{cases}
  \label{eqn:weak_scaling}
\end{align}
%If the communication costs are negligible, the potential speedup can
%be further approximated as
%\begin{align*}
%  \frac{N}{\frac{N}{M} + (q-1)dn} =
%    \frac{M}{1 + (q-1)d\sqrt[q]{M}\frac{M}{N}},
%\end{align*}
%since $N\gg D$.

%% Hence, our distributed SVD will not reduce computational complexity
%% \miwen{i disagree -- if we use our theorems below first compute the svd
%%   of all m blocks of a in parallel, and then merge those results with
%%   their neighbors, and then merge those with their neighbors, etc.,
%%   then we can take advantage of a lot of parallelism and ultimately
%%   should reduce our runtime in the same way that any parallel method
%%   does.  here's how: we compute SVDs in parallel for
%%   $\mathcal{O}(log(N/D))$-iterations, each iteration of which will
%%   take $\mathcal{O}(D^3)$-time.  We will arrive at the correct
%%   singular values/left vectors in just $\mathcal{O}(D^3
%%   log(N/M))$-time, which is faster than what you have above when $N >>
%%   D log(N/D)$.  If we have more processors per node, we can compute
%%   each SVD during each iteration more quickly using existing parallel
%%   techniques.}, although there are parallel (data-distributed, flop
%% distributed) benefits at the expense of communication.

%% \subsection{Theoretical Communication Overhead}

%% \miwen{On the $j^th$-iteration of the for loop in Alg. 1, we need
%% to send $M/2{j+1}$ $D \times d$-matrices to a neighboring node
%% during execution of line 3.  So, $\mathcal{O}(M)$ one-to-one
%% communications ... if this is the standard terminology?}

\section{Numerical Experiments}
\label{sec:numerical_results}
The numerical experiments are organized into two categories.  First,
Theorems \ref{thm:FullRankRecovery} and \ref{thm:LowRankRecovery} are
validated -- the numerical experiments will demonstrate that the
incremental algorithms can recover full rank and low rank
approximations of $A$, up to rounding errors.  Then, numerical
evidence is presented to show the weak and strong scaling behavior of
the algorithms.

\subsection{Accuracy}
In this first experiment, we verify that singular values and the left
singular vectors of $A$ can be recovered using the incremental SVD
algorithm.  A full rank matrix of size $400\times128,000$ is
constructed in MATLAB, and its SVD computed to form the reference
solution.  The numerical experiment consists of partitioning the full
matrix into various block configurations before applying the
incremental SVD algorithm and comparing the singular values and left
singular vectors to the reference solution.  The scenarios and results
are reported in Table~\ref{tbl:full_rank_recovery}: $e_\sigma$ refers
to the maximum relative error of the singular values, i.e.,
\begin{align*}
  \max_{1\le i\le D} \frac{|\tilde{\sigma}_i - \sigma_i|}{|\sigma_i|},
\end{align*}
where $\sigma_i$ is the true singular value and $\tilde{\sigma}_i$ is
the singular value obtained using the hierarchical algorithm.  Similarly, $e_v$
refers to the maximum relative error of all the left singular vectors, i.e.,
\begin{align*}
  \max_{1\le i\le D} \frac{\|\tilde{v}_i - v_i\|_2}{\|v_i\|_2} ~=~\max_{1\le i\le D} \|\tilde{v}_i - v_i\|_2,
\end{align*}
where $v_i$ is the true singular value and $\tilde{v}_i$ is
the singular vector obtained using the hierarchical algorithm.
The number of blocks is $n^q$, where $q$ is the number of levels, and
$n$ is the number of sketches to merge at each level.  The incremental
algorithm recovers the singular values and left singular vectors of a
full rank matrix $A$, up to round-off errors.
\begin{table}[htbp]
  \centering
  \begin{tabular}{|c|c|c|c|c|c|}
    \hline
    $n$ & levels & \# blocks & block size & $e_\sigma$ & $e_v$ \\
    \hline
    2 & 1 & 2 & $400\times 64,000$ & $2.4\times10^{-13}$ & $2.3\times10^{-12}$ \\
     & 2 & 4 & $400\times 32,000$ & $1.4\times10^{-13}$  & $1.1\times10^{-12}$ \\
     & 3 & 8 & $400\times 16,000$ & $6.1\times10^{-14}$  & $2.2\times10^{-12}$ \\
     & 4 & 16 & $400\times 8,000$ & $5.3\times10^{-14}$  & $4.3\times10^{-12}$ \\
     & 5 & 32 & $400\times 4,000$ & $6.4\times10^{-14}$  & $4.3\times10^{-12}$ \\
     & 6 & 64 & $400\times 2,000$ & $5.1\times10^{-14}$  & $1.1\times10^{-12}$ \\
     & 7 & 128 & $400\times 1,000$ & $1.5\times10^{-13}$  & $1.5\times10^{-12}$ \\
     & 8 & 256 & $400\times 500$ & $1.6\times10^{-13}$  & $4.8\times10^{-12}$ \\
    \hline \hline
    4 & 1 & 4 & $400\times 16000$ & $2.3\times10^{-14}$  & $3.0\times10^{-12}$ \\
     & 2 & 16 & $400\times 1000$ & $2.3\times10^{-14}$  & $2.0\times10^{-12}$ \\
     & 3 & 256 & $400\times 125$ & $1.2\times10^{-14}$  & $2.5\times10^{-12}$ \\
    \hline
  \end{tabular}
  \caption{The number of blocks is $n^q$, where $q$ is the number of
    levels, and $n$ is the number of sketches to merge at each level.
    The maximum relative error of the singular values $\sigma$ and
    left singular vectors $v$ are reported.  The incremental algorithm
    recovers the singular values and left singular vectors of a full
    rank matrix $A$, up to round-off errors.}
  \label{tbl:full_rank_recovery}
\end{table}

In the second experiment, the incremental SVD algorithm is used to
recover a rank $d$ approximation, $(A)_d$, of a matrix, $A$.  The
matrix $A$ is again of size $400\times128,000$, and is formed by
taking the product $U\Sigma V'$, where $U$ and $V$ are random
orthogonal matrices of the appropriate size. The singular values are
chosen so that $\|(A)_d - A\|_F^2$ can be controlled.  The scenarios
and results are reported in Table~\ref{tbl:low_rank_recovery}.  Again,
the hierarchical algorithm recovers largest $d$  singular values
and associated left singular vectors to high accuracy.
\begin{table}[htbp]
  \centering
  \begin{tabular}{|c|c|c|c|c|c|c|}
    \hline
    $\|A-(A)_d\|_F^2$ & $n$ & levels & \# blocks & block size & $e_\sigma$ & $e_v$ \\
    \hline
    0.1 & 2 & 1 & 2 & $400\times 64,000$ & $2.3\times 10^{-13}$ & $8.3\times 10^{-9}$\\ 
     & & 2 & 4 & $400\times 32,000$ & $1.5\times 10^{-12}$ & $2.1\times 10^{-8}$ \\ 
     & & 3 & 8 & $400\times 16,000$ & $1.0\times 10^{-11}$ & $5.5\times 10^{-8}$ \\ 
     & & 4 & 16 & $400\times 8,000$ & $3.7\times 10^{-11}$ & $1.1\times 10^{-7}$ \\ 
     & & 5 & 32 & $400\times 4,000$ & $1.4\times 10^{-10}$ & $2.0\times 10^{-7}$ \\ 
     & & 6 & 64 & $400\times 2,000$ & $3.8\times 10^{-10}$ & $3.3\times 10^{-7}$ \\ 
     & & 7 & 128 & $400\times 1,000$ & $2.7\times 10^{-9}$ & $7.9\times 10^{-7}$ \\  
     & & 8 & 256 & $400\times 500$ & $9.9\times 10^{-9}$ & $1.3\times 10^{-6}$ \\  
    \cline{2-7}
    & 4 & 1 & 4 & $400\times 16000$ & $1.5\times 10^{-12}$ & $2.1\times 10^{-8}$ \\ 
     & & 2 & 16 & $400\times 1000$ & $3.7\times 10^{-11}$ & $1.3\times 10^{-7}$ \\  
     & & 3 & 256 & $400\times 125$ & $3.7\times 10^{-10}$ & $3.2\times 10^{-7}$ \\  
    \hline
    0.01 & 2 & 1 & 2 & $400\times 64,000$ & $2.1\times 10^{-14}$ & $8.2\times 10^{-12}$\\ 
     & & 2 & 4 & $400\times 32,000$ & $8.9\times 10^{-15}$ & $2.1\times 10^{-11}$ \\ 
     & & 3 & 8 & $400\times 16,000$ & $5.7\times 10^{-15}$ & $5.5\times 10^{-11}$ \\ 
     & & 4 & 16 & $400\times 8,000$ & $7.4\times 10^{-15}$ & $1.0\times 10^{-10}$ \\ 
     & & 5 & 32 & $400\times 4,000$ & $1.6\times 10^{-14}$ & $2.5\times 10^{-10}$ \\ 
     & & 6 & 64 & $400\times 2,000$ & $3.7\times 10^{-14}$ & $3.2\times 10^{-10}$ \\ 
     & & 7 & 128 & $400\times 1,000$ & $2.8\times 10^{-13}$ & $7.8\times 10^{-10}$ \\  
     & & 8 & 256 & $400\times 500$ & $9.6\times 10^{-13}$ & $1.2\times 10^{-9}$ \\  
    \cline{2-7}
    & 4 & 1 & 4 & $400\times 32,000$ & $1.7\times 10^{-14}$ & $2.1\times 10^{-11}$ \\ 
     & & 2 & 16 & $400\times 8,000$ & $1.2\times 10^{-14}$ & $1.0\times 10^{-10}$ \\  
     & & 3 & 256 & $400\times 500$ & $1.4\times 10^{-14}$ & $3.1\times 10^{-10}$ \\  
    \hline
  \end{tabular}
  \caption{Low rank approximation of $A$ computed using the
    incremental SVD algorithm, compared against the true low rank
    approximation of $A$.}
  \label{tbl:low_rank_recovery}
\end{table}

\subsection{Parallel Scaling}

For the first scaling experiment, the left singular vectors and the
singular values of a matrix $A$ ($D = d =800$, $N = 1,152,000$) are
found using an MPI implementation of our hierarchical algorithm, and a
threaded LAPACK SVD algorithm, {\tt dgesvd}, implemented in the
threaded Intel MKL library.  Computational nodes from the stampede
cluster at the Texas Advanced Computing Center (TACC) were used for
the parallel scaling experiments.  Each node is outfitted with 32GB of
RAM and two Intel Xeon E5-2680 processors for a total of 16 processing
cores per node.  In a pre-processing step, the matrix $A$ is
decomposed, with each block of $A$ stored in separate HDF5 files; the
generated HDF5 files are hosted on the high-speed Lustre server.  The
observed speedup is summarized in
Figure~\ref{fig:scaling_study} \footnote{The raw data used to generate
  the speedup curves for Figure~\ref{fig:scaling_study} is presented
  in Tables~\ref{tbl:mkl_strong_scaling} and
  \ref{tbl:hierarchical_strong_scaling} in the Appendix.}.  In the blue
curve, the observed speedup is reported for a varying number of MKL
worker threads.  In the red curve, the speedup is reported for a
varying number of worker threads $i$, applied to an appropriate
decomposition of the matrix.  Each worker uses the same Intel MKL
library to compute the SVD of the decomposed matrices (each using a
single thread), the proxy matrix is assembled, and the master thread
computes the SVD of the proxy matrix using the Intel MKL library,
again with a single thread.  The parallel performance of our
distributed SVD is superior to the threaded MKL library, this in spite
of the fact that our algorithm was implemented using MPI 2.0 and does
not leverage the inter-node communication savings that is possible
with newer MPI implementations.
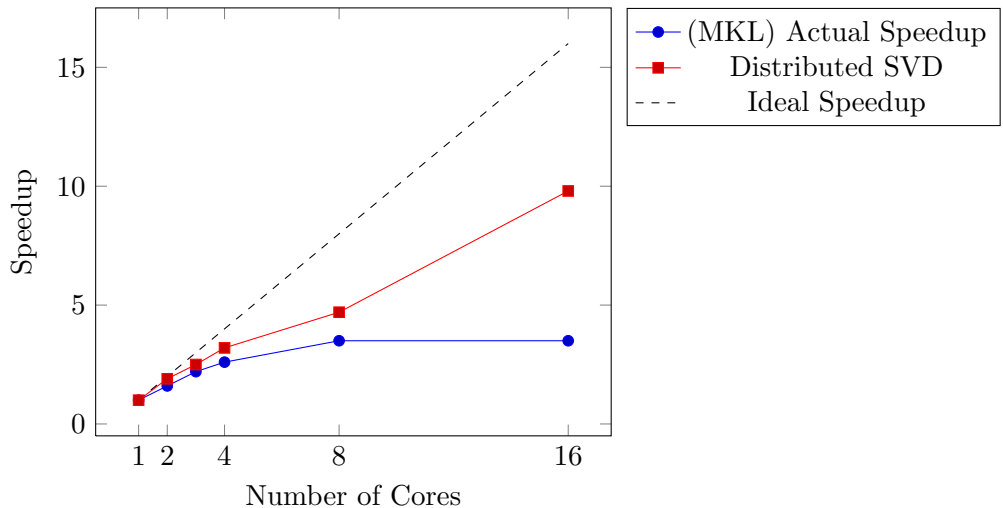
\begin{figure}[htbp]
  \centering
  \begin{tikzpicture}
    \begin{axis}[
        xlabel={Number of Cores},
        xtick={1,2,4,8,16},
        yminorticks=true,
        legend pos = outer north east,
        ylabel={Speedup},
      ]
      
      \addplot coordinates {
        (1, 1)
        (2, 1.6)
        (3, 2.2)
        (4, 2.6)
        (8, 3.5)
        (16, 3.5)
      };
      \addlegendentry{(MKL) Actual Speedup}

      \addplot coordinates {
        (1, 1)
        (2, 1.9)
        (3, 2.5)
        (4, 3.2)
        (8, 4.7)
        (16, 9.8)
      };
      \addlegendentry{Distributed SVD}
      
      \addplot [color=black,dashed] coordinates {
        (1, 1)
        (2, 2)
        (4, 4)
        (8, 8)
        (16, 16)
      };
      \addlegendentry{Ideal Speedup}
      \end{axis}
  \end{tikzpicture}
  \caption{Strong scaling study of the {\tt dgesvd} function in the
    threaded Intel MKL library (blue) and the proposed distributed SVD
    algorithm (red).  The input matrix is of size
    $800\times1,152,000$.  }
  \label{fig:scaling_study}
\end{figure}

The second parallel scaling experiment is a weak scaling study, where
the size of the input matrix $A$ is varied depending on the number of
computing cores, $A = 2000\times 32,000M$, where $M$ is the number of
compute cores.  The theoretical peak efficiency is computed using
equation~\ref{eqn:weak_scaling}, assuming negligible communication
overhead (i.e., we set $\alpha=\beta=0$).  There is slightly better
efficiency if one merges for larger $n$ (more sketches are merged
together at each level).  This behavior will persist until the size of
the proxy matrix is comparable or larger than the original blocksize.
The asymptotic behavior of the efficiency curves agree with the
theoretical peak efficiency curve if $M>16$.
\begin{figure}[htbp]
  \centering
  \begin{tikzpicture}
    \begin{semilogxaxis}[
        xlabel={Number of Cores},
        yminorticks=true,
        legend pos = outer north east,
        ylabel={Efficiency},
        ymin=0, ymax=1,
        log basis x={2}]]
      
      \addplot[mark=oplus*, color=blue] coordinates {
        (1, 1)
        (2, 0.78)
        (4, 0.55)
        (8, 0.38)
        (16, 0.32)
        (32, 0.26)
        (64, 0.23)
        (128, 0.19)
        (256, 0.16)
        (512, 0.15)
      };
      \addlegendentry{$n=2$}

      \addplot[mark=none,style=dashed,color=blue] coordinates {
        %(1,1)
        %(2,1.83)
        %(4,3.35/4)
        %(8,6.17/8)
        %(16,11.45/16)
        %(32,21.32/32)
        %(64,39.88/64)
        %(128,74.9/128)
        %(256,141.2/256)
        %(512,2267/512)

        %% LHC bidiagonilization cost
        (1,1)
        (2,1.65/2)
        (4,2.83/4)
        (8,4.96/8)
        (16, 8.86/16)
        (32, 16/32)
        (64, 29.3/64)
        (128,53.9/128)
        (256,99.9/256)
        (512,186/512)
      };
      \addlegendentry{theoretical peak, $n=2$}
      
      \addplot [mark=square*,color=red] coordinates {
        (1, 1)
        (3, 0.7)
        (9, 0.43)
        (27, 0.34)
        (81, 0.28)
        (243, 0.22)
      };
      \addlegendentry{$n=3$}

      \addplot [mark=none,style=dashed,color=red] coordinates {
        (1, 1)
        (3, 2.33/3)
        (9, 5.8/9)
        (27, 14.94/27)
        (81, 39.3/81)
        (243, 105/243)
      };
      \addlegendentry{theoretical peak, $n=3$}
      
      \addplot [mark=diamond*,color=brown] coordinates {
        (1, 1)
        (4, 0.65)
        (16, 0.42)
        (64, 0.32)
        (256, 0.24)
      };

      \addlegendentry{$n=4$}

      \addplot [mark=none,color=brown, style=dashed] coordinates {
        (1, 1)
        (4, 2.95/4)
        (16, 9.51/16)
        (64, 32.0/64)
        (256, 110.7/256)
      };

      \addlegendentry{theoretical peak, $n=4$}

    \end{semilogxaxis}
  \end{tikzpicture}
  \caption{Weak scaling study of the one-level SVD algorithm.  The
    input matrix is of size $2000\times (32000 M)$, where $M$ is the
    processing cores used in the computation.  The observed efficiency
    is plotted for various $n$'s (number of scaled singular vectors
    concatenated at each hierarchical level).  There is a slight
    efficiency gain when increasing $n$.}
  \label{fig:weak_scaling}
\end{figure}
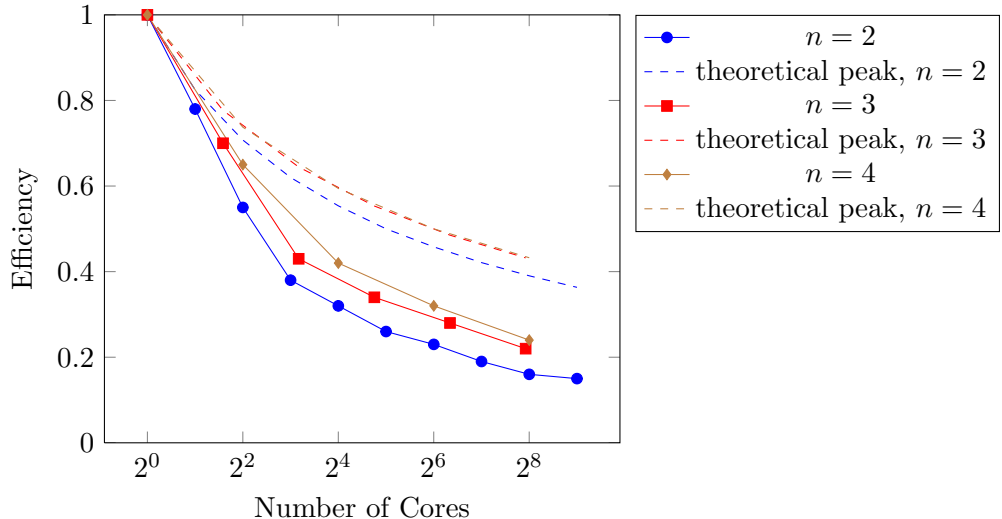

In the last experiment, we repeat the weak scaling study (where the
size of the input matrix $A$ is varied depending on the number of
worker nodes, $A = 2000\times 32,000M$, where $M$ is the number of
compute cores, but utilize a priori knowledge that the rank of $A$ is
much less than the ambient dimension.  Specifically, we construct a
data set with $d = 200 \ll 2000$.  The hierarchical SVD performs more
efficiently if the intrinsic dimension of the data can be estimated a
priori.  Similar observations can be made to the previous experiment:
the asymptotic behavior of the efficiency curves agree with the
theoretical peak efficiency curve if $M>16$.
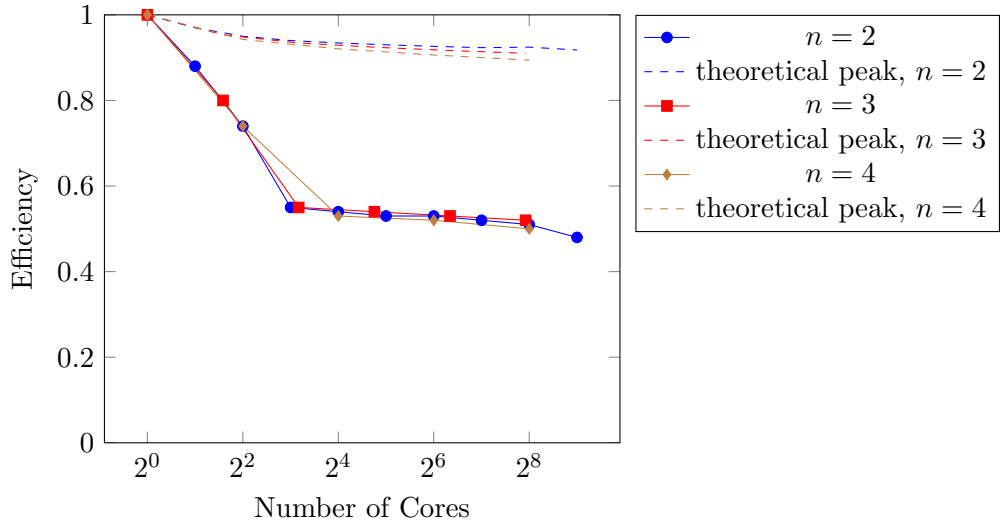
\begin{figure}[htbp]
  \centering
  \begin{tikzpicture}
    \begin{semilogxaxis}[
        xlabel={Number of Cores},
        yminorticks=true,
        legend pos = outer north east,
        ylabel={Efficiency},
        ymin=0,
        ymax=1,
        log basis x={2}]]

      \addplot[mark=oplus*, color=blue] coordinates {
        (1, 1)
        (2, 0.88) 
        (4, 0.74)
        (8, 0.55)
        (16, 0.54)
        (32, 0.53)
        (64, 0.53)
        (128, 0.52)
        (256, 0.51)
        (512, 0.48)
      };
      \addlegendentry{$n=2$}

      \addplot[mark=none,style=dashed,color=blue] coordinates {
        (1,1)
        (2,1.94/2)
        (4,3.80/4)
        (8,7.52/8)
        (16, 14.95/16)
        (32,29.76/32)
        (64, 59.29/64)
        (128,118.19/128)
        (256,236.68/256)
        (512,470.00/512)
      };
      \addlegendentry{theoretical peak, $n=2$}

      \addplot [mark=square*,color=red] coordinates {
        (1, 1)
        (3, 0.8)
        (9, 0.55)
        (27,0.54)
        (81, 0.53)
        (243, 0.52)
      };
      \addlegendentry{$n=3$}

      \addplot [mark=none,style=dashed,color=red] coordinates {
        (1, 1)
        (3, 2.86/3)
        (9, 8.41/9)
        (27, 24.96/27)
        (81, 74.25/81)
        (243, 221.15/243)
      };
      \addlegendentry{theoretical peak, $n=3$}      

      \addplot [mark=diamond*,color=brown] coordinates {
        (1, 1)
        (4, 0.74)
        (16, 0.53)
        (64, 0.52)
        (256, 0.50)
      };
      \addlegendentry{$n=4$}

      \addplot [mark=none,color=brown, style=dashed] coordinates {
        (1, 1)
        (4, 3.77/4)
        (16, 14.73/16)
        (64, 58.00/64)
        (256, 228.94/256)
      };
      \addlegendentry{theoretical peak, $n=4$}

    \end{semilogxaxis}
  \end{tikzpicture}
  \caption{Weak scaling study of the hierarchical SVD algorithm
    applied to data with intrinsic dimension much lower than the
    ambient dimension.  The input matrix is of size $2000\times (32000
    M)$, where $M$ is the processing cores used in the computation.
    The intrinsic dimension is $d=200 \ll 2000$.  The observed
    efficiency is plotted for various $n$'s (number of scaled singular
    vectors concatenated at each hierarchical level).  As expected, the
    theoretical and observed efficiency are better if the intrinsic
    dimension is known (or can be estimated) a priori.} 
  \label{fig:weak_scaling_rankd}
\end{figure}

\section{Concluding Remarks and Acknowledgments}

In this paper, we show that the SVD of a matrix can be constructed
efficiently in a hierarchical approach.  Our algorithm is proven to
recover exactly the singular values and left singular vectors if the
rank of the matrix $A$ is known.  Further, the hierarchical algorithm
can be used to recover the $d$ largest singular values and left
singular vectors with bounded error.  We also show that the proposed
method is stable with respect to roundoff errors or corruption of the
original matrix entries.  Numerical experiments validate the proposed
algorithms and parallel cost analysis.

The authors note that the practicality of the hierarchical algorithm
is questionable for sparse input matrices, since the assembled proxy
matrices as posed will be dense.  Further investigation in this
direction is required, but beyond the scope of this paper.  Lastly,
the hierarchical algorithm has a map--reduce flavor that will lend
itself well to a map reduce framework such as Apache Hadoop
\cite{White:2009:HDG:1717298} or Apache Spark
\cite{Zaharia:2010:SCC:1863103.1863113}.

%% This work was supported in part by AFOSR Grant FA9550-12-1-0455,
%% Michigan State University through computational resources provided by
%% the Institute for Cyber-Enabled Research, and by Michigan
%% Technological University through Superior, the high performance
%% computing cluster @ MTU.

\bibliographystyle{abbrv}
\bibliography{svd}

\appendix
\section{Raw data from numerical experiments}
This section contains raw data used to generate the speedup curves in
Section~\ref{sec:numerical_results} for the parallel scaling studies.
The simulations were conducted on the stampede cluster at the Texas
Advanced Computing Center (TACC). Each node is outfitted with 32GB of
RAM and two Intel Xeon E5-2680 processors for a total of 16 processing
cores per node.  The performance application programming interface
(PAPI) \cite{Browne01082000} was used to obtain an estimate of the
computational throughput obtained by the implementations.  

\begin{table}[htbp]
  \centering
  \begin{tabular}{|c|c|c|c|}
    \hline
    \# threads & Walltime (s)& Speedup & MFLOP/$\mu$s$^*$ \\
    \hline
    1  & 245 & -- & 40 \\
    2  & 151 & 1.6 & 68 \\
    3  & 110 & 2.2 & 100 \\
    4  &  91 & 2.6 & 126 \\
    8  &  71 & 3.5 & 194 \\
    16 &  71 & 3.5 & 224 \\
    \hline
  \end{tabular}
  \caption{Raw data used to compute the speedup curve for
    Figure~\ref{fig:scaling_study} using the threaded MKL {\tt dgesvd}
    implementation.  The reported MFLOP/$\mu$s is estimated by scaling
    the throughput of the master thread reported by PAPI.  PAPI is
    unable to report the total MFLOP/$\mu$s since the MKL library uses
    pthreads to spawn and remove threads as needed.}
  \label{tbl:mkl_strong_scaling}
\end{table}

\begin{table}[htbp]
  \centering
  \begin{tabular}{|c|c|c|c|c|c|c|}
    \hline
    $n$ & levels & \# blocks & block size & Walltime (s)& Speedup & MFLOP/$\mu$s \\
    \hline
    1  & 1 & 1  & $800\times 1,152,000$ & 245 & --  & 40\\
    2  & 1 & 2  & $800\times 768,000$   & 126 & 1.9 & 81  \\
    3  & 1 & 3  & $800\times 512,000$   & 97  & 2.5 & 105 \\
    4  & 1 & 4  & $800\times 384,000$   & 76  & 3.2 & 136 \\
    8  & 1 & 8  & $800\times 192,000$   & 52  & 4.7 & 204 \\
    16 & 1 & 16 & $800\times 96,000$    & 25  & 9.8 & 447 \\
    \hline
  \end{tabular}
  \caption{Raw data used to compute the speedup curve for
    Figure~\ref{fig:scaling_study} using the incremental SVD implementation.}
  \label{tbl:hierarchical_strong_scaling}
\end{table}

\begin{table}[htbp]
  \centering
  \begin{tabular}{|c|c|c|c|c|c|}
    \hline
    $n$ & levels & \# blocks & Walltime (s)& Efficiency & MFLOP/$\mu$s \\
    \hline
    2  & 0 & 1    & 32  & 1    & 25  \\
       & 1 & 2    & 41  & 0.78 & 46  \\
       & 2 & 4    & 58  & 0.55 & 74  \\
       & 3 & 8    & 84  & 0.38 & 110 \\
       & 4 & 16   & 101 & 0.32 & 185 \\
       & 5 & 32   & 121 & 0.26 & 314 \\
       & 6 & 64   & 138 & 0.23 & 552\\
       & 7 & 128  & 172 & 0.19 & 907\\
       & 8 & 256  & 195 & 0.16 & 1641\\
       & 9 & 512  & 218 & 0.15 & 2928 \\
    \hline
    3 & 0 & 1   & 32  & 1    & 25\\
      & 1 & 3   & 46  & 0.70 & 63 \\
      & 2 & 9   & 74  & 0.43 & 127 \\
      & 3 & 27  & 94  & 0.34 & 305 \\
      & 4 & 81  & 113 & 0.28 & 767 \\
      & 5 & 243 & 148 & 0.22 & 1819\\
    \hline
    4 & 0 & 1   & 32  & 1 & 25 \\
      & 1 & 4   & 49  & 0.65 & 78 \\
      & 2 & 16  & 77  & 0.42 & 209 \\
      & 3 & 64  & 99  & 0.32 & 660 \\
      & 4 & 256 & 131 & 0.24 & 2041 \\
    \hline
  \end{tabular}
  \caption{Raw data used to compute the efficiency curves for
    Figure~\ref{fig:weak_scaling} using the hierarchical SVD
    implementation. Each block is of size $2000\times32,000$.  In
    this experiment, all 2000 singular values and left singular vectors
    are computed. }
  \label{tbl:hierarchical_weak_scaling_full_rank}
\end{table}

\begin{table}[htbp]
  \centering
  \begin{tabular}{|c|c|c|c|c|c|}
    \hline
    $n$ & levels & \# blocks & Walltime (s)& Efficiency & MFLOP/$\mu$s \\
    \hline
    2  & 0 & 1    & 28 & 1 & 29 \\
       & 1 & 2    & 32 & 0.88 & 51 \\
       & 2 & 4    & 38 & 0.74 & 90 \\
       & 3 & 8    & 51 & 0.55 & 135 \\
       & 4 & 16   & 52 & 0.54 & 268 \\
       & 5 & 32   & 53 & 0.53 & 528 \\
       & 6 & 64   & 53 & 0.53 & 1041 \\
       & 7 & 128  & 54 & 0.52 & 2056 \\
       & 8 & 256  & 55 & 0.51 & 4065 \\
       & 9 & 512  & 58 & 0.48 & 7889 \\
    \hline
    3 & 0 & 1   & 28 & 1 & 29 \\
      & 1 & 3   & 35 & 0.80 & 72 \\
      & 2 & 9   & 51 & 0.55 & 151 \\
      & 3 & 27  & 52 & 0.54 & 445 \\
      & 4 & 81  & 53 & 0.53 & 1322 \\
      & 5 & 243 & 54 & 0.52 & 3905 \\
    \hline
    4 & 0 & 1   & 28 & 1 & 29 \\
      & 1 & 4   & 38 & 0.74 & 89 \\
      & 2 & 16  & 53 & 0.53 & 264 \\
      & 3 & 64  & 54 & 0.52  & 1021 \\
      & 4 & 256 & 56 & 0.50 & 3940 \\
    \hline
  \end{tabular}
  \caption{Raw data used to compute the efficiency curves for
    Figure~\ref{fig:weak_scaling_rankd} using the hierarchical SVD
    implementation.  Each block is of size $2000\times32,000$.  In
    this experiment, the input matrix is of rank $d=200$; the
    hierarchical algorithm finds the 200 singular values and left
    singular vectors.}
  \label{tbl:hierarchical_weak_scaling_rankd}
\end{table}

\end{document}